\newtheorem{theorem}{Theorem}[section]
\newtheorem{lemma}[theorem]{Lemma}
\newtheorem{corollary}[theorem]{Corollary}
\newtheorem{definition}[theorem]{Definition}
\newtheorem{remark}[theorem]{Remark}
\numberwithin{equation}{section}
\newenvironment{proof}[1][Proof]{\noindent\textbf{#1. }}{\hfill $\Box$}
\begin{document}
\author{Quanguo Zhang \thanks{Corresponding author:
zhangqg07@163.com.}
\\
{\small Department of Mathematics, Luoyang Normal University,
 Luoyang, Henan 471022, P.R. China}}
\title{\textbf{\Large  On the critical exponents  for a  fractional diffusion-wave equation with a nonlinear memory term in a bounded domain
  } }
\date{}
\maketitle
\begin{abstract}
In this paper, we prove sharp blow-up and global existence results for a time 
fractional diffusion-wave equation with a nonlinear memory term in a bounded domain, where the fractional derivative in time is
taken in the sense of Caputo type. Moreover, we also give a result for  nonexistence of global solutions to a wave equation with a nonlinear memory term in a bounded domain.  The proof of blow-up results is based on the eigenfunction method and the asymptotic properties of solutions for an ordinary fractional differential inequality.

\textbf{Keywords}: Fractional diffusion-wave equation; Blow-up;
Global existence;   Nonlinear memory

\end{abstract}

\section{Introduction}

\noindent

This paper  is mainly concerned with the blow-up and global existence of solutions for the following time fractional diffusion-wave equation with a nonlinear memory term:
\begin{equation}\label{20.1}
\left\{\begin{array}{l}{_0D_t^\alpha u}-\triangle u={}_0^{}I_t^{\gamma}(|u|^{p}),\ \
(t,x)\in (0,T)\times\Omega,\\
u(t,x)=0,\ \ (t,x)\in (0,T)\times\partial\Omega,\\
u(0,x)=u_0(x),\ \  u_t(0,x)=u_1(x),\ \  x\in \Omega,
\end{array}\right.
\end{equation}
where $\Omega$ is a bounded domain in $\mathbb{R}^N$ with smooth boundary $\partial\Omega$, $u_0,u_1\in L^\infty(\Omega)$, $1<\alpha<2$, $\gamma>0$, $p>1$, and
\[{_0D_t^\alpha u}=\frac{\partial^2}{\partial
t^2}[{_0I_t^{2-\alpha}(u(t,x)-u_1(x)t-u_0(x))}].\]
We also give a result for  nonexistence of global solutions to problem \eqref{20.1} with $\alpha=2$.

In recent years, fractional differential equations have gained considerable popularity and importance, due to their demonstrated applications in seemingly widespread fields of science and engineering \cite{Metzler1,kilbas2006}. Hence, recently, there are
a lot of papers on the existence and properties of solutions for  fractional differential equations\cite{vr,kim,Schneider,zacher,ZhangSun,acv,R.N.Wang,wzf,llw,zhou,zhangli2,
zhangli3,Zhangli4, tuan,giga,asofwa,andrade,agkw}.

Equation \eqref{20.1} interpolates the heat equation and the wave equation. Let us present a historical overview on some blow-up and global existence results for semilinear heat and wave equations.
For the following Cauchy problem of semilinear heat equation
\begin{equation}\label{20.4}
\left\{\begin{array}{l} u_t-\triangle u=|u|^{p-1}u,\ \ x\in \mathbb{R}^N,\ \ t>0,\\
u(0,x)=u_0(x),\ \  x\in \mathbb{R}^N,
\end{array}\right.
\end{equation}
it is well known that all solutions of \eqref{20.4} with $u_0\geq 0$, $u_0\not\equiv0$, blow up in finite time if and only if $p\leq 1+\frac{2}{N}$, and if $p>1+\frac{2}{N}$ and $\|u_0\|_{L^{q_c}(\mathbb{R}^N)}$ is small enough, where $q_c=\frac{N(p-1)}{2}$, then the solution of \eqref{20.4} is global. The number $1+\frac{2}{N}$ is called the Fujita critical exponent of problem \eqref{20.4}.
We refer to \cite{qs} for the proof of these results.
Recently, Zhang and Sun\cite{ZhangSun} considered problem \eqref{20.4} with ${_0D_t^\alpha u}$ $ (0<\alpha<1)$ instead of $u_{t}$.  They proved that if $1 < p < 1+\frac2N$, then any nontrivial positive solution  blows up in finite time, while if $p \geq 1+\frac2N$ and the initial value is sufficiently small, the solution exists globally.

For the semilinear
wave equation
\begin{equation}\label{202.4}
\left\{\begin{array}{l} u_{tt}-\triangle u=|u|^{p},\ \ x\in \mathbb{R}^N,\ \ t>0,\\
u(0,x)=u_0(x),\ \  u_t(0,x)=u_1(x),\ \ x\in \mathbb{R}^N,
\end{array}\right.
\end{equation}
the critical exponent is $p_c(N)$, which is the positive root of the quadratic equation $(N-1)p^2-(N+1)p-2=0 $ for $N>1$, see \cite{Yordanov} and the references therein. Recently, in \cite{Zhangli4}, the authors determined the critical exponents of problem \eqref{202.4} with ${_0D_t^\alpha u}$ $ (1<\alpha<2)$ instead of $u_{tt}$  when $u_1\equiv0$ and $u_1\not\equiv0, $ respectively.

Let us now turn to the study of semilinear time fractional diffusion equations with a nonlinear memory term and semilinear wave equations with  a nonlinear memory term.   There have been many papers that considered the existence and nonexistence of  global solutions for these problems \cite{llz,T.Cazenave,fino1,fino2,chen,chen2,Abbicco,li,zhangli2,zhangli3}.

In \cite{T.Cazenave}, Cazenave et al. studied the following semilinear heat equation with a nonlinear memory term
\[
u_t-\triangle u={}_0^{}I_t^{1-\gamma}(|u|^{p-1}u)
\]
on both $\mathbb{R}^N$ and a bounded domain $\Omega\subset\mathbb{R}^N$, and obtained the critical exponents of this problem. Recently, the authors of \cite{li,zhangli2,zhangli3} generalized the results of \cite{T.Cazenave} to the time fractional case ($0<\alpha<1$), and gave the critical exponents of this problem  when $\alpha<\gamma$ and $\alpha\geq\gamma$, respectively.

In \cite{chen}, Chen and Palmieri established a generalized Strauss exponent $p_0(N,\gamma)$
for problem \eqref{20.1} with $\alpha=2$ on $\mathbb{R}^N$. They proved blow-up of energy solutions  if $1<p\leq p_0(N,\gamma)$ for $N\geq 2$, and $p>1$ for $N=1$. Fino and Jazar \cite{fino2} proved that the nontrivial solution of problem \eqref{20.1} with $\alpha=2$ blows up in finite time if $p(1-\gamma)<1$. In \cite {llz}, the authors proved all solutions of \eqref{20.1} with $\alpha=2$ on $(0,\infty)$ blow up, provided that $u_0$, $u_1$ have compact support and satisfy a certain positivity condition.

Motivated by the above results, in this paper, we investigate  sharp blow-up and global existence of  solutions for problem \eqref{20.1}, and then extend the results in \cite{T.Cazenave,zhangli3,zhangli2} to the time fractional case ($1<\alpha<2$).  Moreover, we also give a result for  nonexistence of global solutions to a wave equation with a nonlinear memory term(i.e. \eqref{20.1} with $\alpha=2$).

It should be mentioned that the study of the blow-up and global existence of solutions  for \eqref{20.1} is not a simple generalization of those in the previous researches on semilinear diffusion equations with a nonlinear memory term. For one thing, our proof of blow-up results is based on the asymptotic properties of solutions for an ordinary fractional differential inequality. In the case $0<\alpha<1$, the proof of these properties depends on the nonnegativity of the Mittag-Leffler function $E_{\alpha,\alpha}(t)$ for $t\in \mathbb{R}$. But, $E_{\alpha,\alpha}(t)$ is not nonnegative on $(-\infty,0]$ in the case $1<\alpha<2$, which gives us some technical difficulties in the treatments.  To overcome these difficulties, we study the asymptotic behavior of $_0I_t^\beta w$ for some $\beta>0$ instead of $w$ (see Lemma \ref{lemma17}, Corollary \ref{coro1}-\ref{coro2}). This also allows us to obtain a result of the nonexistence of global solutions for problem \eqref{20.1} with $\alpha=2$, which asserts that the case $p(1-\gamma)=1$ is in the blow-up category.  For another, the estimates of the solution operators on $L^\infty(\Omega)$ are crucial to prove the global existence of solutions. In the case $0<\alpha<1$, one can easily obtain estimates of the solution operators on $L^\infty(\Omega)$, since the solution operators can be represented by a probability density function and the heat semigroup in $\Omega$ with
the Dirichlet boundary condition.  But, for the case $1<\alpha<2$, this representation is invalid. We prove the estimates of the solution operators on $L^\infty(\Omega)$ by the complex integral representations of the solution operators (see Lemma \ref{lemma100}).

The remaining part of the paper is organized as follows.  In Section 2, we present some
results on the Mittag-Leffler function, the fractional derivatives and the
fractional integrals, and show some results on an ordinary fractional differential inequality  which will be used to prove the blow-up results. In Section 3, the local existence and uniqueness of the mild solution of problem \eqref{20.1} are given, and we prove sharp blow-up and global existence of solutions for problem \eqref{20.1}.

For simplicity, in this paper, we use $C$ to denote a positive constant which may vary from line to line, but it is not essential to the analysis of the problem.

\section{Preliminaries}

\noindent

In this section, we present some preliminaries that will be used in
the next section.

First, we recall some properties of the Mittag-Leffler function with two parameters \cite{kilbas2006,podlubny1999}. The Mittag-Leffler function with two parameters is defined by
\[
E_{\alpha,\beta}(z)=\sum_{k=0}^\infty\frac{z^k}{\Gamma(\alpha k+\beta)},
\ \alpha,\beta\in\mathbb{C},\alpha>0,\ \ E_{\alpha}(z)=E_{\alpha,1}(z),\ \ z\in \mathbb{C},
\]
which is an entire function. Let $\mu$ be a real number such that
$\frac{\pi\alpha}{2}<\mu<\min\{\pi,\pi\alpha\}.$ $E_{\alpha,\beta}(z)$ has different asymptotic behavior at infinity for $0<\alpha<2$ and $\alpha=2$. If $0<\alpha<2$, then for $N\in \mathbb{N}$,
\begin{equation}\label{100.1}
E_{\alpha,\beta}(z)=-\sum_{k=1}^N\frac{1}{\Gamma(\beta-\alpha k)}\frac{1}{z^k}+O(\frac{1}{z^{N+1}})
\end{equation}
with $|z|\rightarrow\infty,$ $\mu\leq |\arg(z)|\leq \pi$. If $\alpha=2$, then for $x>0$ and $N\in \mathbb{N}$,
\begin{equation}\label{100.2}
E_{2,\beta}(-x)=x^{\frac{1-\beta}{2}}\cos(\sqrt{x}+\frac{\pi(1-\beta)}{2})
-\sum_{k=1}^N\frac{1}{\Gamma(\beta-2 k)}\frac{(-1)^k}{x^k}+O(\frac{1}{x^{N+1}})
\end{equation}
with $x\rightarrow+\infty.$

Next, we recall the definitions of the fractional
derivatives and fractional integrals \cite{kilbas2006}.
For $T>0$, $\alpha>0$, the Riemann-Liouville fractional integrals are defined
by
\[
{_0I_t^{\alpha}}u=\frac{1}{\Gamma(\alpha)}\int_0^t\frac{u(s)}{(t-s)^{1-\alpha}}ds,\ \ {_tI_T^{\alpha}}u=\frac{1}{\Gamma(\alpha)}\int_t^T\frac{u(s)}{(s-t)^{1-\alpha}}ds.
\]
For $T>0$, $\alpha\in (1,2)$, the Caputo fractional derivatives are defined
by
\[
{}_0D_t^\alpha g= \frac{d^2}{dt^2}{}_0I_t^{2-\alpha}[ g(t)-g'(0)t-g(0)],\ _tD_T^\alpha g= \frac{d^2}{dt^2}{}_tI_T^{2-\alpha}[ g(t)-g'(T)(t-T)-g(T)].
\]
When $\alpha=2$, we define ${}_0D_t^2 g={}_tD_T^2 g=g''(t)$.

The fractional derivatives and fractional integrals have the following properties.
\begin{lemma}\label{l1}\cite{kilbas2006} Let $\alpha,\beta>0$ and $T>0$.
\begin{enumerate}
  \item [\rm (i)] ${}_0I_t^\alpha$ and ${}_tI_T^\alpha$ are bounded in $L^p(0,T)$ for every $p\geq 1$.
  \item [\rm (ii)] ${}_0I_t^\alpha({}_0I_t^\beta f)={}_0I_t^{\alpha+\beta}f$ and ${}_tI_T^\alpha({}_tI_T^\beta f)={}_tI_T^{\alpha+\beta}f$ if $f\in L^1(0,T)$.
  \item [\rm (iii)] If $f\in L^p(0,T)$, $g\in L^q(0,T)$ and $p,q\geq1$, $1/p+1/q=1$, then
\begin{equation}\label{112-4}
\int_0^T({_0I_t^\alpha}f)g(t)
dt=\int_0^T({_tI_T^\alpha}g)f(t)dt.
\end{equation}
\item [\rm (iv)] If $\alpha\in (1,2)$, $g\in AC^2([0,T])$, then ${}_0D_t^\alpha g$ and ${}_tD_T^\alpha g$ exist almost everywhere on $[0,T]$ and ${}_0D_t^\alpha g={}_0I_t^{2-\alpha}g''(t)$, ${}_tD_T^\alpha g={}_tI_T^{2-\alpha}g''(t)$.
\item [\rm (v)] For $\lambda\in \mathbb{C}$ and $\gamma> 0$, we have
    \begin{equation}\label{7.25.1}
    {}_0I_t^\gamma [t^{\beta-1}E_{\alpha,\beta}(\lambda t^\alpha)]=t^{\beta+\gamma-1}E_{\alpha,\beta+\gamma}(\lambda t^\alpha).
    \end{equation}
\end{enumerate}
\end{lemma}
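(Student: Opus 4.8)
The plan is to derive all five items from the convolution structure of the Riemann--Liouville integral, writing ${}_0I_t^\alpha u$ as the convolution of $u$ with the kernel $k_\alpha(t)=t^{\alpha-1}/\Gamma(\alpha)$, supplemented by a term-by-term series argument for (v). I would establish them in the order (i), (ii), (iii), (iv), (v), since (ii)--(iv) reuse the integrability furnished by (i) and (iv) relies on (ii). For (i), the point is that $\alpha>0$ makes $t^{\alpha-1}$ integrable at the origin, so $k_\alpha\in L^1(0,T)$ with $\|k_\alpha\|_{L^1(0,T)}=T^\alpha/\Gamma(\alpha+1)$. Viewing ${}_0I_t^\alpha$ as the integral operator with kernel $K(t,s)=k_\alpha(t-s)\mathbf{1}_{\{s<t\}}$, whose row and column $L^1$ norms are both bounded by $\|k_\alpha\|_{L^1(0,T)}$, Young's convolution inequality (or the Schur test) gives $\|{}_0I_t^\alpha u\|_{L^p(0,T)}\le \|k_\alpha\|_{L^1(0,T)}\|u\|_{L^p(0,T)}$ for every $p\ge1$; the right-sided operator ${}_tI_T^\alpha$ is treated identically after the reflection $t\mapsto T-t$.

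For (ii), I would compute ${}_0I_t^\alpha({}_0I_t^\beta f)$ directly, swap the order of integration by Fubini (legitimate by applying (i) to $|f|$), and evaluate the inner integral $\int_\tau^t (t-s)^{\alpha-1}(s-\tau)^{\beta-1}\,ds$ via the substitution $s=\tau+(t-\tau)\sigma$, which turns it into $(t-\tau)^{\alpha+\beta-1}B(\alpha,\beta)$; inserting $B(\alpha,\beta)=\Gamma(\alpha)\Gamma(\beta)/\Gamma(\alpha+\beta)$ collapses the expression to ${}_0I_t^{\alpha+\beta}f$, and the right-sided identity follows by the same reflection. For (iii), since $f\in L^p$, $g\in L^q$ with conjugate exponents, the double integral $\int_0^T\int_0^t |g(t)|(t-s)^{\alpha-1}|f(s)|\,ds\,dt$ is finite by (i) and H\"older's inequality, so Fubini applies; exchanging the order of integration over the triangle $\{0<s<t<T\}$ and recognising the inner integral $\int_s^T (t-s)^{\alpha-1}g(t)\,dt$ as $\Gamma(\alpha)\,{}_tI_T^\alpha g(s)$ yields \eqref{112-4}.

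The main obstacle is (iv), where one must justify commuting $d^2/dt^2$ with the fractional integral. I would set $h(t)=g(t)-g'(0)t-g(0)$, so the $AC^2$ hypothesis gives $h(0)=h'(0)=0$ and $h''=g''\in L^1$; integrating twice yields $h={}_0I_t^2 h''$, whence by (ii) ${}_0I_t^{2-\alpha}h={}_0I_t^{4-\alpha}h''$. Since $4-\alpha>2$ for $\alpha\in(1,2)$, the map $t\mapsto {}_0I_t^{4-\alpha}h''$ is $C^2$, and applying $\frac{d}{dt}{}_0I_t^\mu\phi={}_0I_t^{\mu-1}\phi$ (valid for $\mu>1$, by differentiating under the integral sign where the boundary term vanishes) twice with $\mu=4-\alpha$ and $\mu=3-\alpha$ gives $\frac{d^2}{dt^2}{}_0I_t^{4-\alpha}h''={}_0I_t^{2-\alpha}h''={}_0I_t^{2-\alpha}g''$; this is precisely ${}_0D_t^\alpha g=\frac{d^2}{dt^2}{}_0I_t^{2-\alpha}h$, and the right-sided statement is symmetric. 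Finally, (v) is formal: substituting $t^{\beta-1}E_{\alpha,\beta}(\lambda t^\alpha)=\sum_{k\ge0}\lambda^k t^{\alpha k+\beta-1}/\Gamma(\alpha k+\beta)$, integrating term by term (justified by local uniform convergence of the entire Mittag-Leffler series) and using the monomial rule ${}_0I_t^\gamma t^{\mu-1}=\Gamma(\mu)t^{\mu+\gamma-1}/\Gamma(\mu+\gamma)$ with $\mu=\alpha k+\beta$ cancels the factors $\Gamma(\alpha k+\beta)$ and reproduces $t^{\beta+\gamma-1}E_{\alpha,\beta+\gamma}(\lambda t^\alpha)$, which is \eqref{7.25.1}.
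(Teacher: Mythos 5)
The paper offers no proof of this lemma at all---it is quoted directly from the Kilbas--Srivastava--Trujillo monograph---so the only benchmark is the standard textbook argument, and yours is exactly that: Young's inequality (or the Schur test) for (i), Fubini plus the Beta integral for (ii), Fubini over the triangle $\{0<s<t<T\}$ for (iii), the reduction $h={}_0I_t^2h''$ combined with (ii) for (iv), and term-by-term integration of the entire Mittag--Leffler series for (v); all five are correct. One small overstatement in (iv): ${}_0I_t^{4-\alpha}h''={}_0I_t^2\bigl({}_0I_t^{2-\alpha}h''\bigr)$ is $C^1$ with absolutely continuous first derivative, not $C^2$ in general, so its second derivative ${}_0I_t^{2-\alpha}g''$ exists only almost everywhere---but a.e.\ existence is precisely what the lemma asserts, so your argument delivers the stated conclusion unchanged.
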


For the fractional derivatives, we have the following formula of integration by parts.
\begin{lemma}\label{l2}
Assume that $1<\alpha\leq 2$, $g\in AC^2([0,T]) $, $g(T)=g'(T)=0$, $f\in C^1([0,T])$,  ${_0D_t^\alpha}f$ exists almost everywhere for $t\in (0,T)$ and ${_0D_t^\alpha}f\in L^1(0,T)$. Then
\begin{equation}\label{112-3}
\int_0^T{_0D_t^\alpha}f\cdot g
dt=\int_0^T(f(t)-f'(0)t-f(0))\ {_tD_T^\alpha}gdt.
\end{equation}
\end{lemma}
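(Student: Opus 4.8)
The plan is to reduce the left Caputo derivative ${_0D_t^\alpha}f$ to the classical second derivative of a Riemann--Liouville integral, integrate by parts twice in the ordinary sense, discard all boundary terms, and finally convert the remaining fractional integral into the right Caputo derivative using the fractional integration-by-parts identity \eqref{112-4}. This is cleaner than attacking both fractional derivatives simultaneously, because the Caputo derivatives are by definition second derivatives of fractional integrals, so one classical integration by parts moves the two ordinary derivatives off of the integral of $f$ and onto $g$.

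Concretely, I would set $h(t)=f(t)-f'(0)t-f(0)$ and $\phi(t)={_0I_t^{2-\alpha}}h(t)$, so that by definition ${_0D_t^\alpha}f=\phi''(t)$, and the hypothesis guarantees $\phi''\in L^1(0,T)$. Note that $h(0)=0$ and $h'(0)=f'(0)-f'(0)=0$. The first task is to record the two boundary values $\phi(0)=0$ and $\phi'(0)=0$: since $h$ is continuous with $h(0)=0$, the fractional integral $\phi={_0I_t^{2-\alpha}}h$ of positive order $2-\alpha$ vanishes at $t=0$; and because $h(0)=0$ one has $\phi'(t)={_0I_t^{2-\alpha}}h'(t)$, which again vanishes at $t=0$ since $h'$ is continuous with $h'(0)=0$. (For $\alpha=2$ these reduce to $\phi=h$ and $\phi(0)=h(0)=0$, $\phi'(0)=h'(0)=0$.)

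Next I would integrate by parts twice, obtaining
\begin{equation*}
\int_0^T \phi''(t) g(t)\,dt = \big[\phi'(t)g(t) - \phi(t)g'(t)\big]_0^T + \int_0^T \phi(t)g''(t)\,dt,
\end{equation*}
where the boundary contribution at $t=T$ is zero because $g(T)=g'(T)=0$, and the contribution at $t=0$ is zero because $\phi(0)=\phi'(0)=0$. Then I would rewrite the surviving integral with \eqref{112-4}, applied with order $2-\alpha$ to the pair $h$ and $g''$, as $\int_0^T({_tI_T^{2-\alpha}}g'')\,h(t)\,dt$, and identify ${_tI_T^{2-\alpha}}g''={_tD_T^\alpha}g$ via Lemma \ref{l1}(iv), which is legitimate since $g\in AC^2([0,T])$ and $g(T)=g'(T)=0$. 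Substituting $h(t)=f(t)-f'(0)t-f(0)$ then yields exactly \eqref{112-3}.

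The main obstacle is regularity rather than algebra: the two classical integrations by parts require $\phi'$ to be absolutely continuous, so that $\phi$ and $\phi'$ are recovered by integrating $\phi'$ and $\phi''={_0D_t^\alpha}f\in L^1$, and this must be drawn from the standing assumption that ${_0D_t^\alpha}f$ exists almost everywhere and lies in $L^1(0,T)$. I would justify the differentiation formula $\tfrac{d}{dt}{_0I_t^{2-\alpha}}h={_0I_t^{2-\alpha}}h'$, valid precisely because $h(0)=0$ so that no singular $t^{1-\alpha}$ boundary term appears, and confirm that $\phi'={_0I_t^{2-\alpha}}h'$ is continuous on $[0,T]$ with the stated value at the origin; the remaining check that $g\in AC^2$ and $\phi'\in AC$ make the boundary manipulation rigorous is then routine. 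The endpoint case $\alpha=2$ should be treated separately, but there $2-\alpha=0$, the operator ${_0I_t^0}$ is the identity, and the identity collapses to the ordinary twice-integration-by-parts formula.
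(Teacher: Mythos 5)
Your proposal is correct and follows essentially the same route as the paper's own proof: reduce ${_0D_t^\alpha}f$ to $\frac{d^2}{dt^2}{_0I_t^{2-\alpha}}h$ with $h(t)=f(t)-f'(0)t-f(0)$, verify that the boundary terms vanish (using $g(T)=g'(T)=0$ at $T$, and the identity $\frac{d}{dt}{_0I_t^{2-\alpha}}h={_0I_t^{2-\alpha}}h'$ together with $h(0)=h'(0)=0$ at the origin), integrate by parts twice classically, transfer the fractional integral to $g''$ via \eqref{112-4}, and identify ${_tI_T^{2-\alpha}}g''={_tD_T^\alpha}g$ by Lemma \ref{l1}(iv), with $\alpha=2$ handled separately as ordinary integration by parts. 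Your explicit attention to the absolute continuity of $\phi'$ is a slightly more careful statement of a point the paper leaves implicit, but the argument is the same.
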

\begin{proof}
We only give the proof for the case $1<\alpha<2$, since the conclusion of the case $\alpha=2$ can be obtained by the classic formula of integration by parts.

Since $f\in C^1([0,T])$, a simple calculation shows that $\frac{d}{dt}[{}_0^{}I_t^{2-\alpha}(f(t)-f'(0)t-f(0))]={}_0^{}I_t^{2-\alpha}(f'(t)-f'(0))$. Hence the assumptions on $f$ and $g$ imply that
\[
[\frac{d}{dt}{}_0^{}I_t^{2-\alpha}(f(t)-f'(0)t-f(0))]g(t)\big|_0^T=0,
\]
and
\[
{}_0^{}I_t^{2-\alpha}[ f(s)-f'(0)s-f(0)]g'(t)\big|_0^T=0.
\]
Hence, using the definition of Caputo fractional derivatives, Lemma \ref{l1} and the classic formula of integration by parts, one gets
\begin{align*}
\int_0^T{_0D_t^\alpha}f\cdot g
dt&=\int_0^T \frac{d^2}{dt^2}{}_0^{}I_t^{2-\alpha}[ f(s)-f'(0)s-f(0)]g(t)dt\\
&=-\int_0^T \frac{d}{dt}{}_0^{}I_t^{2-\alpha}[ f(s)-f'(0)s-f(0)]g'(t)dt\\
&=\int_0^T {}_0^{}I_t^{2-\alpha}[ f(s)-f'(0)s-f(0)]g''(t)dt\\
&=\int_0^T [ f(t)-f'(0)t-f(0)]{}_t^{}I_T^{2-\alpha}g''(t)dt\\
&=\int_0^T[f(t)-f'(0)t-f(0)]\ {_tD_T^\alpha}gdt.
\end{align*}
This completes the proof of Lemma \ref{l2}.
\end{proof}

In order to prove blow-up results by the eigenfunction method due to \cite{Ka}, we prove the following results on an ordinary fractional differential equation, which are similar to ones in \cite{T.Cazenave, zhangli2}.
\begin{lemma}\label{lemma17}
Let $T>0$, $\gamma>0$, $1<\alpha<2$,  $p>1$, $a,b>0$ and $f\in L^1(0,T)$. If $w\in C^1([0,T])$, and $w\not \equiv0$ satisfies $_0I_t^{2-\alpha}(w-w'(0)t-w(0))\in AC^2([0,T])$ and for almost every $t\in [0,T]$,
\begin{equation}\label{1.7-1}
_0D_t^\alpha w+aw={}_0^{}I_t^{\gamma}f, \ \ f(t)\geq b|w(t)|^p,
\end{equation}
then $w$ satisfies the following properties:
\begin{enumerate}
  \item[\rm (i)] There exist positive constants $K_1$ and $K_2$ independent of $T$ such that \[Tw'(0)+w(0)\leq K_1T^{\alpha+\gamma-\frac{p\gamma}{p-1}}
      +K_2T^{-\frac{\alpha+\gamma}{p-1}}.\]
\item[\rm (ii)] If $T=+\infty,$  then $\liminf_{t\rightarrow +\infty}|w(t)|=0$ and $\liminf_{t\rightarrow +\infty}t^{\min\{\frac{\alpha+\gamma-1}{p},\frac{\gamma}{p-1}\}}|w(t)|<+\infty$.
\item[\rm (iii)] If $T=+\infty,$ $\gamma\geq \frac{\alpha}{2}$ and $\alpha+\gamma>2$,  then $\liminf_{t\rightarrow +\infty}t^{1-\gamma} w(t)>0$.
\item[\rm(iv)] If $T=+\infty,$ $\gamma\geq \frac{\alpha}{2}$, $\alpha+\gamma\leq 2$ and $w'(0)=0$, then $\liminf_{t\rightarrow +\infty}t^{1-\gamma} w(t)>0$.
\item[\rm(v)] If $T=+\infty,$ $\gamma\geq \frac{\alpha}{2}$, $\alpha+\gamma\leq 2$ and $w'(0)>0$, then $\liminf_{t\rightarrow +\infty}t^{\alpha-1} w(t)>0$.
\item[\rm(vi)] If $T=+\infty,$ $0<\gamma< \frac{\alpha}{2}$, then for every $\beta$ satisfying $\frac{\alpha}{2}\leq \gamma+\beta<1$ and $0<\beta<\alpha$, we have $\liminf_{t\rightarrow +\infty}t^{1-\gamma-\beta} {}_0I_t^\beta w>0$ if $\alpha+\gamma>2$, or $\alpha+\gamma\leq 2$ and $w'(0)=0$.
\item[\rm(vii)] If $T=+\infty,$ $0<\gamma< \frac{\alpha}{2}$, $\alpha+\gamma\leq 2$ and $w'(0)>0$, then for every $\beta$ satisfying $\frac{\alpha}{2}\leq \gamma+\beta<1$ and $0<\beta<\alpha$, we have $\liminf_{t\rightarrow +\infty}t^{\alpha-\beta-1} {}_0I_t^\beta w>0$.
\item[\rm(viii)] If one of the following conditions holds:
\begin{enumerate}
\item[\rm(a)] $\alpha+\gamma> 2$ and $p(1-\gamma)\leq 1$;
\item[\rm(b)] $\alpha+\gamma\leq 2$, $w'(0)=0$ and $p(1-\gamma)\leq 1$;
\item[\rm(c)] $\alpha+\gamma=2$, $w'(0)>0$ and $p(1-\gamma)\leq 1$;
\item[\rm(d)] $\alpha+\gamma<2$, $w'(0)>0$ and $p<1+\frac{\gamma}{\alpha-1}$,
\end{enumerate}
then $T<+\infty$.
\end{enumerate}
\end{lemma}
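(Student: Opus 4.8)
The plan is to treat the linear part of \eqref{1.7-1} by variation of constants, fold the memory term into a single Mittag-Leffler kernel, and then run two complementary arguments: a test-function (Kaplan-type) estimate for the upper bounds (i)--(ii), and an asymptotic analysis of the representation for the lower bounds (iii)--(vii); the blow-up statement (viii) is obtained by feeding the lower bounds back into the test-function estimate. Solving ${}_0D_t^\alpha w+aw={}_0I_t^\gamma f$ and using Lemma \ref{l1}(v) to absorb the extra ${}_0I_t^\gamma$ into the second index, I expect the representation
\[
w(t)=w(0)E_\alpha(-at^\alpha)+w'(0)\,tE_{\alpha,2}(-at^\alpha)+\int_0^t(t-s)^{\alpha+\gamma-1}E_{\alpha,\alpha+\gamma}(-a(t-s)^\alpha)f(s)\,ds .
\]

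For (i) I would multiply \eqref{1.7-1} by the rescaled test function $\varphi(t)=\psi(t/T)$ with $\psi(1)=\psi'(1)=0$, push the Caputo derivative onto $\varphi$ via Lemma \ref{l2}, and transfer ${}_0I_t^\gamma$ through the duality \eqref{112-4}. Young's inequality applied to $a\int w\varphi$ and $\int w\,{}_tD_T^\alpha\varphi$ absorbs a small multiple of $\int|w|^p({}_tI_T^\gamma\varphi)$ and leaves error terms that, after $t=T\tau$, scale as $T^{1-\gamma/(p-1)}$ and $T^{1-(\alpha p+\gamma)/(p-1)}$; multiplying through by $T^{\alpha-1}$ turns these into the $K_1$- and $K_2$-terms (note $\alpha-\gamma/(p-1)=\alpha+\gamma-p\gamma/(p-1)$), while the boundary terms reproduce $Tw'(0)+w(0)$. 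Part (ii) uses the same identity: if $\liminf_{t\to\infty}t^{m}|w|=+\infty$ with $m=\min\{(\alpha+\gamma-1)/p,\gamma/(p-1)\}$, then $\int|w|^p({}_tI_T^\gamma\varphi)$ would grow in $T$ faster than the admissible errors, a contradiction; since $m>0$ this also forces $\liminf|w|=0$.

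The core of the lemma, and the main obstacle, is (iii)--(vii), where the difficulty flagged in the introduction surfaces: for $1<\alpha<2$ the kernel $(t)^{\alpha+\gamma-1}E_{\alpha,\alpha+\gamma}(-at^\alpha)$ need not be nonnegative. I would read off from \eqref{100.1} that $E_{\alpha,\mu}(-at^\alpha)$ has leading term $\frac{1}{a\Gamma(\mu-\alpha)}t^{-\alpha}$, so the kernel of index $\mu=\alpha+\gamma$ behaves like $\frac{1}{a\Gamma(\gamma)}t^{\gamma-1}>0$, whereas the data terms decay like $t^{-\alpha}$ (from $w(0)$) and $t^{1-\alpha}$ with the \emph{positive} coefficient $\frac{1}{a\Gamma(2-\alpha)}$ (from $w'(0)$). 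The hypothesis $\gamma\geq\alpha/2$, i.e. $\mu\geq 3\alpha/2$, is exactly what makes this kernel nonnegative on $[0,\infty)$; fixing $t_1$ with $\int_0^{t_1}f>0$ (possible since $w\not\equiv0$), I would discard $\int_{t_1}^t(\cdots)f\geq0$ and estimate $\int_0^{t_1}$ by the kernel tail to obtain a memory contribution $\gtrsim t^{\gamma-1}$. The sign of $\alpha+\gamma-2$ then decides whether $t^{\gamma-1}$ or the $w'(0)$-term $t^{1-\alpha}$ wins, giving $\liminf t^{1-\gamma}w>0$ when $\alpha+\gamma>2$ (iii) or $\alpha+\gamma\leq2,\ w'(0)=0$ (iv), and $\liminf t^{\alpha-1}w>0$ when $\alpha+\gamma\leq2,\ w'(0)>0$ (v). For (vi)--(vii), where $\gamma<\alpha/2$ breaks nonnegativity, I apply ${}_0I_t^\beta$ to the representation; by Lemma \ref{l1}(v) this raises the second index to $\alpha+\gamma+\beta$, and $\gamma+\beta\geq\alpha/2$ restores nonnegativity while $\gamma+\beta<1,\ \beta<\alpha$ keep the exponents admissible, so the identical dominance analysis yields the stated liminf bounds for ${}_0I_t^\beta w$.

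Finally, (viii) is a rate contradiction under $T=+\infty$. In cases (a)--(c) the lower bound $w\gtrsim t^{\gamma-1}$ (or its ${}_0I_t^\beta$-analogue, reinstated in the nonlinear term by a Jensen-type inequality when $\gamma<\alpha/2$) gives $f\geq b|w|^p\gtrsim t^{-p(1-\gamma)}$, and inserting this into the right side of the test-function identity produces $\int|w|^p({}_tI_T^\gamma\varphi)\gtrsim T^{\gamma+1-p(1-\gamma)}$ when $p(1-\gamma)<1$ and $\gtrsim T^{\gamma}\log T$ when $p(1-\gamma)=1$. The upper bound from (i) is $\lesssim T^{1-\gamma/(p-1)}$ (plus a $T^{2-\alpha}$ contribution that is subdominant once $\alpha+\gamma>2$ in (a), absent when $w'(0)=0$ in (b), and exactly matched at $\alpha+\gamma=2$ in (c)); since $p(1-\gamma)\leq1$ forces $\gamma+1-p(1-\gamma)\geq 1-\gamma/(p-1)$ with equality only at criticality — where the extra $\log T$ tips the balance — the two bounds are incompatible for large $T$. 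Case (d) is immediate from (i): with $w'(0)>0$ the left side grows like $w'(0)T$, but $p<1+\gamma/(\alpha-1)$ makes the $K_1$-exponent $\alpha-\gamma/(p-1)<1$ and the $K_2$-exponent negative, so (i) fails for large $T$. I expect the genuinely hard steps to be the pointwise nonnegativity of the kernel for $\mu\geq 3\alpha/2$ and the bookkeeping of which term dominates in each regime; once those signs and exponents are pinned down, every conclusion follows by matching powers (and one logarithm) of $T$.
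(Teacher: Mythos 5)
Your plan for (i)--(vii) coincides in all essentials with the paper's proof: the Kaplan/test-function estimate for (i)--(ii) (the paper takes $\varphi={}_tD_T^\gamma\psi_T$ with $\psi_T(t)=(1-t/T)^l$, so that ${}_tI_T^\gamma\varphi=\psi_T$; your choice $\varphi=\psi(t/T)$ differs only in bookkeeping), the representation $w(t)=w(0)E_\alpha(-at^\alpha)+w'(0)tE_{\alpha,2}(-at^\alpha)+\int_0^t(t-s)^{\alpha+\gamma-1}E_{\alpha,\alpha+\gamma}(-a(t-s)^\alpha)f(s)\,ds$, the positivity of $E_{\alpha,\rho}(-z)$ for $\rho\geq\frac{3\alpha}{2}$ (you need not prove this ``hard step'': it is Pskhu's theorem, which the paper simply cites), and the index-raising device ${}_0I_t^\beta$ when $\gamma<\frac{\alpha}{2}$. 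Case (d) of (viii) and all subcritical cases $p(1-\gamma)<1$ also go through as you describe.

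The genuine gap is in (viii) at the critical exponent $p(1-\gamma)=1$ when $\gamma<\frac{\alpha}{2}$ --- a nonempty sub-case of each of (a), (b), (c) (e.g. $\alpha=1.8$, $\gamma=0.3$, $p=10/7$), and precisely the case the paper advertises as its main point. There you only control ${}_0I_t^\beta w\gtrsim t^{\gamma+\beta-1}$, and you propose to ``reinstate'' this in the nonlinear term by a Jensen-type inequality. Two problems. First, your intermediate claim $f\geq b|w|^p\gtrsim t^{-p(1-\gamma)}$ is a \emph{pointwise} bound on $w$ and does not follow from a lower bound on its fractional integral. Second, what Jensen actually yields, namely ${}_0I_t^\beta(|w|^p)\geq \Gamma(\beta+1)^{p-1}t^{\beta(1-p)}({}_0I_t^\beta w)^p\gtrsim t^{\beta-p(1-\gamma)}=t^{\beta-1}$, is too lossy at criticality: feeding it back through duality, $\int_0^T|w|^p\psi_T\,dt=\int_0^T{}_0I_t^\beta(|w|^p)\,{}_tD_T^\beta\psi_T\,dt\gtrsim\int_L^T t^{\beta-1}\,T^{-l}(T-t)^{l-\beta}\,dt$, and the substitution $t=T\tau$ shows the right-hand side is a convergent Beta integral, hence $O(1)$ --- not the $\log T$ divergence your rate comparison needs, since the test-function upper bound is also $O(1)$ at criticality. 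So no contradiction is produced. The paper closes this case not with Jensen but with the weighted $L^p$-boundedness of the Riemann--Liouville operator,
\begin{equation*}
\Big(\int_0^{+\infty}t^{-\beta p}|{}_0I_t^\beta w|^p\,dt\Big)^{1/p}\leq \frac{\Gamma(1-1/p)}{\Gamma(\beta+1-1/p)}\Big(\int_0^{+\infty}|w|^p\,dt\Big)^{1/p}
\end{equation*}
(Kilbas et al., Lemma 2.3(a), or Folland): the lower bound ${}_0I_t^\beta w\gtrsim t^{\gamma+\beta-1}$ makes the left side diverge (the integrand is $\gtrsim t^{-1}$), forcing $\int_0^{+\infty}|w|^p\,dt=+\infty$, which contradicts the finiteness $\int_0^{+\infty}|w|^p\,dt<+\infty$ that the test-function estimate gives when $p(1-\gamma)=1$. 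You need this sharp weighted inequality (or an equivalent); Jensen cannot replace it.
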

\begin{proof} The proofs of Property (i) and (ii) are similar to those of Lemma 5(i),(ii) and (iv) in \cite{zhangli2}. For the convenience of the reader and the completeness of the paper, here we give sketchy proofs of Property (i) and (ii).

(i) We use the test function method to prove this conclusion. From \eqref{112-4}, \eqref{112-3} and \eqref{1.7-1}, we know that
\begin{equation}\label{6.10-2}
\int_0^T[w({}_tD_T^\alpha \varphi)+aw\varphi]dt\geq b\int_0^T|w|^p{}_t^{}I_T^{\gamma}\varphi dt+w(0)\int_0^T{}_tD_T^\alpha \varphi dt+w'(0)\int_0^Tt\cdot{}_tD_T^\alpha \varphi dt,
\end{equation}
for every $\varphi\in AC^2([0,T])$ with $\varphi(T)=\varphi'(T)=0$, $\varphi\geq0$.

Note that for $l\geq \frac{p(\alpha+\gamma)}{p-1}$,
\begin{equation}\label{113-2}
{}_tD_T^\gamma (1-\frac{t}{T})^l=T^{-l}{}_tD_T^\gamma (T-t)^l =\frac{\Gamma(l+1)}{\Gamma(l+1-\gamma)}T^{-l}(T-t)^{l-\gamma},
\end{equation}
\begin{equation}\label{7.26.1}
{}_tI_T^\gamma[ {}_tD_T^\gamma (1-\frac{t}{T})^l]  =\frac{\Gamma(l+1)}{\Gamma(l+1-\gamma)}T^{-l}{}_tI_T^\gamma(T-t)^{l-\gamma}
=T^{-l}(T-t)^{l},
\end{equation}
\begin{equation}\label{113-3}
{}_tD_T^\alpha[{}_tD_T^\gamma (1-\frac{t}{T})^l]=\frac{\Gamma(l+1)}{\Gamma(l+1-\alpha-\gamma)}T^{-l}
(T-t)^{l-\alpha-\gamma},
\end{equation}
(see, e.g., Property 2.16 in \cite{kilbas2006}).
We take $\varphi(t)={}_tD_T^\gamma \psi_T$ as the test function in \eqref{6.10-2}, where $\psi_T(t)=(1-\frac{t}{T})^l$. It should be emphasized that choosing the test function of the type   $\psi_T(t)$ to prove the nonexistence of global solutions to fractional differential equations firstly appeared in \cite{Ki}. By \eqref{6.10-2}-\eqref{113-3}, we have
\begin{align*}
&w'(0)\int_0^Tt\cdot{}_tD_T^\alpha({}_tD_T^\gamma \psi_T)dt+w(0)\int_0^T{}_tD_T^\alpha({}_tD_T^\gamma \psi_T)dt+b\int_0^T|w|^p\psi_Tdt\\
&\leq \int_0^Tw[{}_tD_T^\alpha({}_tD_T^\gamma \psi_T)]dt+a\int_0^Tw({}_tD_T^\gamma \psi_T)dt\\
&\leq \frac{b}{2}\int_0^T|w|^p \psi_T dt
+CT^{1-\frac{p\gamma}{p-1}}+ CT^{1-\frac{p(\alpha+\gamma)}{p-1}}
\end{align*}
for some constant $C>0$, where we have used Young's inequality.
From this,  we know that there exist positive constants $K_1$ and $K_2$ such that
\begin{align}\label{6.16.1}
\int_0^T|w|^p \psi_T dt+w(0)T^{1-\alpha-\gamma}+w'(0)T^{2-\alpha-\gamma}\leq K_1T^{1-\frac{p\gamma}{p-1}}+ K_2T^{1-\frac{p(\alpha+\gamma)}{p-1}}.
\end{align}
Consequently,
\begin{align*}
Tw'(0)+w(0)\leq K_1T^{\alpha+\gamma-\frac{p\gamma}{p-1}}+K_2T^{-\frac{\alpha+\gamma}{p-1}}.
\end{align*}

(ii) We prove the first part in the statement.  We argue by contradiction. Suppose that there exist $\tau>0$ and $\eta>0$ such that $|w(t)|\geq \eta$ for all $t\geq\tau$. Then we deduce from \eqref{6.16.1} that for $T\geq 2\tau$,
\begin{align*}
\frac{T\eta^p}{2(l+1)}=\eta^p\int_{\frac{T}{2}}^T\psi_T(t)dt&\leq \int_\tau^T |w|^p\psi_Tdt\\
&\leq K_1T^{1-\frac{p\gamma}{p-1}}+K_2T^{1-\frac{p(\alpha+\gamma)}{p-1}}
+|w(0)|T^{1-(\alpha+\gamma)}+|w'(0)|T^{2-(\alpha+\gamma)}.
\end{align*}
In other words,
\[\frac{1}{2(l+1)}\eta^p\leq K_1T^{-\frac{p\gamma}{p-1}}+K_2T^{-\frac{p(\alpha+\gamma)}{p-1}}
+|w(0)|T^{-(\alpha+\gamma)}+|w'(0)|T^{1-(\alpha+\gamma)}.\]
Letting $T\rightarrow+\infty$, we have $\eta=0$ which contradicts $\eta>0$. Thus $\liminf_{t\rightarrow+\infty}|w(t)|=0$.

Next, we prove the second part in the statement.
In fact, since $\liminf_{t\rightarrow+\infty}|w(t)|=0$, we know that there exist $\tilde{\tau}\geq0$ and a nondecreasing sequence $\{t_n\}$ such that $t_n\rightarrow +\infty$ and $|w(t_n)|=\min_{\tilde{\tau}\leq t\leq t_n}|w(t)|$. Hence it follows from \eqref{6.16.1} that for $t_n\geq2\tilde{\tau}$,
\begin{align*}
|w(t_n)|^p\int_{\frac{t_n}{2}}^{t_n}\psi_{t_n}(t)dt&\leq \int_{\tilde{\tau}}^{t_n}|w(t)|^p\psi_{t_n}(t)dt\\
&\leq K_1{t_n}^{1-\frac{p\gamma}{p-1}}+K_2{t_n}^{1-\frac{p(\alpha+\gamma)}{p-1}}
+|w(0)|{t_n}^{1-(\alpha+\gamma)}+|w'(0)|{t_n}^{2-(\alpha+\gamma)}.
\end{align*}
This implies that
\begin{align*}
|w(t_n)|^p&\leq C[{t_n}^{-\frac{p\gamma}{p-1}}+{t_n}^{-\frac{p(\alpha+\gamma)}{p-1}}
+|w(0)|{t_n}^{-(\alpha+\gamma)}+|w'(0)|{t_n}^{1-(\alpha+\gamma)}]\\
&\leq C[{t_n}^{-\frac{p\gamma}{p-1}}+{t_n}^{1-(\alpha+\gamma)}]\leq C{t_n}^{-\min\{\alpha+\gamma-1,\frac{p\gamma}{p-1}\}}.
\end{align*}
Therefore $\liminf_{n\rightarrow\infty}t_n^{\min\{\frac{\alpha+\gamma-1}{p},\frac{\gamma}{p-1}\}}|w(t_n)|<+\infty$, which proves the desired conclusion.

(iii) In terms of $_0D_t^\alpha w+aw={}_0I_t^\gamma f(t)$, the solution $w$ is
explicitly expressed as follows (see e.g. \cite{zhou,kilbas2006,agkw})
\begin{align*}
w(t)&= E_\alpha(-at^\alpha)w(0)+tE_{\alpha,2}(-at^\alpha)w'(0)+
\int_0^t(t-s)^{\alpha-1}E_{\alpha,\alpha}(-a(t-s)^\alpha){}_0I_s^\gamma f ds.
\end{align*}
Then it follows from Fubini's theorem and \eqref{7.25.1} that
\begin{align}\label{8.11.1}
w(t)&= E_\alpha(-at^\alpha)w(0)+tE_{\alpha,2}(-at^\alpha)w'(0)+
\frac{1}{\Gamma(\gamma)}\int_0^t\int_0^{t-\tau}h^{\alpha-1}(t-\tau-h)^{\gamma-1}
E_{\alpha,\alpha}(-ah^\alpha)
 dh f(\tau)d\tau\nonumber\\
&= E_\alpha(-at^\alpha)w(0)+tE_{\alpha,2}(-at^\alpha)w'(0)+
\int_0^t(t-s)^{\gamma+\alpha-1}E_{\alpha,\alpha+\gamma}(-a(t-s)^\alpha)f(s) ds.
\end{align}
Furthermore, observing $E_{\alpha,\rho}(z)>0$ for all $z\in \mathbb{R}$ when $1<\alpha<2$ and $\rho\geq \frac{3\alpha}{2}$ ( see Theorem 2 in \cite{Pskhu}), we derive from \eqref{1.7-1} and \eqref{8.11.1}  that
\begin{equation}\label{18-6}
w(t)\geq E_\alpha(-at^\alpha)w(0)+tE_{\alpha,2}(-at^\alpha)w'(0)+b
\int_0^t(t-s)^{\alpha+\gamma-1}E_{\alpha,\alpha+\gamma}(-a(t-s)^\alpha)|w(s)|^p ds.
\end{equation}

Sine $w\not\equiv0$, we suppose $w(t_0)\not=0$ for some $t_0\in [0,T)$. Then there exists $\delta>0$ such that $w(t)\not=0$ for $t\in [t_0,t_0+\delta]$. In view of \eqref{100.1}, we have
\[
\lim_{t\rightarrow +\infty} t^\alpha E_{\alpha,\alpha+\gamma}(-at^\alpha)=\frac{1}{a\Gamma(\gamma)}.
\]
This implies
\[
\lim_{t\rightarrow +\infty} \int_{t_0}^{t_0+\delta}(t-\tau)^\alpha E_{\alpha,\alpha+\gamma}(-a(t-\tau)^\alpha)d\tau=\frac{\delta}{a\Gamma(\gamma)}.
\]
Hence, for $t$ large enough, we deduce from \eqref{18-6} that
\begin{align}\label{6.14.1}
w(t)\geq& E_\alpha(-at^\alpha)w(0)+tE_{\alpha,2}(-at^\alpha)w'(0)\nonumber\\
&+ b\min_{t\in [t_0,t_0+\delta]}|w(t)|^p \int_{t_0}^{t_0+\delta}(t-s)^{\alpha+\gamma-1}E_{\alpha,\alpha+\gamma}(-a(t-s)^\alpha) ds\nonumber\\
\geq& E_\alpha(-at^\alpha)w(0)+tE_{\alpha,2}(-at^\alpha)w'(0)+ \frac{b\delta t^{\gamma-1}}{2a\Gamma(\gamma)}\min_{t\in [t_0,t_0+\delta]}|w(t)|^p.
\end{align}
Note that \eqref{100.1} yields that
\begin{equation}\label{6.14.2}
w(0)E_\alpha(-at^\alpha)= \frac{w(0)}{a\Gamma(1-\alpha)}\frac{1}{t^\alpha}+O(\frac{1}{t^{2\alpha}}),\
w'(0)tE_{\alpha,2}(-at^\alpha)= \frac{w'(0)}{a\Gamma(2-\alpha)}\frac{1}{t^{\alpha-1}}+O(\frac{1}{t^{2\alpha-1}}),
\end{equation}
as $t\rightarrow +\infty$. Moreover $\alpha+\gamma>2$ implies $1-\gamma<\alpha-1$. Hence it follows from \eqref{6.14.1} and \eqref{6.14.2} that
$\liminf_{t\rightarrow +\infty}t^{1-\gamma} w(t)>0$.

(iv) Since $w'(0)=0$, it follows from \eqref{6.14.1} that
\begin{align*}
w(t)\geq E_\alpha(-at^\alpha)w(0)+ \frac{b\delta t^{\gamma-1}}{2a\Gamma(\gamma)}\min_{t\in [t_0,t_0+\delta]}|w(t)|^p.
\end{align*}
Thus we can obtain the desired conclusion by \eqref{6.14.2}.

(v) In this case, we have $1-\gamma\geq\alpha-1$. Thus, using \eqref{6.14.1}, \eqref{6.14.2} and the fact that $w'(0)>0$, we get $\liminf_{t\rightarrow +\infty}t^{\alpha-1} w(t)>0$.

(vi) Since $0<\gamma<\frac{\alpha}{2}$ and $1<\alpha<2$, we can choose $\beta$ satisfying $\frac{\alpha}{2}\leq \gamma+\beta<1$ and $0<\beta<\alpha$. From \eqref{8.11.1} and \eqref{7.25.1}, we know that
\begin{align}\label{7.26.2}
{}_0I_t^\beta w=&{}_0I_t^\beta E_\alpha(-at^\alpha)w(0)+{}_0I_t^\beta [tE_{\alpha,2}(-at^\alpha)w'(0)]+{}_0I_t^\beta
\int_0^t(t-s)^{\gamma+\alpha-1}E_{\alpha,\alpha+\gamma}(-a(t-s)^\alpha)f(s) ds\nonumber\\
=&t^\beta E_{\alpha,1+\beta}(-at^\alpha)w(0)+t^{1+\beta}E_{\alpha,2+\beta}(-at^\alpha)w'(0)\nonumber\\
&+\int_0^t (t-s)^{\gamma+\alpha+\beta-1}E_{\alpha,\alpha+\gamma+\beta}(-a(t-s)^\alpha)f(s) ds,
\end{align}
where we have used the fact that
\begin{align}\label{6.16.8}
&{}_0I_t^\beta
\int_0^t(t-\tau)^{\gamma+\alpha-1}E_{\alpha,\alpha+\gamma}(-a(t-\tau)^\alpha)f(\tau) d\tau\nonumber\\
&=\frac{1}{\Gamma(\beta)}\int_0^t\int_0^{s}(t-s)^{\beta-1}(s-\tau)^{\alpha+\gamma-1}
E_{\alpha,\alpha+\gamma}(-a(s-\tau)^\alpha)f(\tau)d\tau ds\nonumber\\
&=\frac{1}{\Gamma(\beta)}\int_0^t\int_\tau^{t}(t-s)^{\beta-1}(s-\tau)^{\alpha+\gamma-1}
E_{\alpha,\alpha+\gamma}(-a(s-\tau)^\alpha)f(\tau) dsd\tau\nonumber\\
&=\frac{1}{\Gamma(\beta)}\int_0^t\int_0^{t-\tau}(t-\tau-h)^{\beta-1}h^{\alpha+\gamma-1}
E_{\alpha,\alpha+\gamma}(-ah^\alpha) dhf(\tau)d\tau\nonumber\\
&=\int_0^t(t-\tau)^{\alpha+\gamma+\beta-1}
E_{\alpha,\alpha+\gamma+\beta}(-a(t-\tau)^\alpha)f(\tau)d\tau.
\end{align}
Furthermore, since $\gamma+\beta\geq\frac{\alpha}{2}$, we know $E_{\alpha,\alpha+\gamma+\beta}(-z)>0$ for every $z\in \mathbb{R}$ (see Theorem 2 in \cite{Pskhu} ). Thus, using \eqref{7.26.2}, and by an argument similar to the proof of Property(iii), we have that for $t$ large enough,
\begin{align}\label{6.14.3}
{}_0I_t^\beta w
\geq& t^\beta E_{\alpha,1+\beta}(-at^\alpha)w(0)+t^{1+\beta}E_{\alpha,2+\beta}(-at^\alpha)w'(0)\nonumber\\
&+\int_0^t(t-\tau)^{\alpha+\gamma+\beta-1}
E_{\alpha,\alpha+\gamma+\beta}(-a(t-\tau)^\alpha)|w(\tau)|^pd\tau\nonumber\\
\geq & t^\beta E_{\alpha,1+\beta}(-at^\alpha)w(0)+t^{1+\beta}E_{\alpha,2+\beta}(-at^\alpha)w'(0)+\frac{\delta t^{\gamma+\beta-1}}{2a\Gamma(\gamma+\beta)}\min_{t\in [t_0,t_0+\delta]}|w(t)|^p.
\end{align}
Observe that $\beta <\alpha$ and
\begin{equation}\label{6.14.4}
w(0)t^\beta E_{\alpha, 1+\beta}(-at^\alpha)= \frac{w(0)}{a\Gamma(1+\beta-\alpha)}\frac{1}{t^{\alpha-\beta}}+O(\frac{1}{t^{2\alpha-\beta}}),
\end{equation}
\begin{equation}\label{6.14.5}
w'(0)t^{1+\beta}E_{\alpha,2+\beta}(-at^\alpha)= \frac{w'(0)}{a\Gamma(2+\beta-\alpha)}\frac{1}{t^{\alpha-\beta-1}}+O(\frac{1}{t^{2\alpha-\beta-1}}),
\end{equation}
as $t\rightarrow +\infty$. Then it follows from \eqref{6.14.3}, \eqref{6.14.4} and \eqref{6.14.5}  that $\liminf_{t\rightarrow +\infty}t^{1-\gamma-\beta} {}_0I_t^\beta w>0$ if $\alpha+\gamma>2$, or $\alpha+\gamma\leq 2$ and $w'(0)=0$.

(vii) The assumption that $\alpha+\gamma\leq 2$  yields $\alpha-\beta-1\leq 1-\gamma-\beta$. Thus the desired conclusion can be obtained by \eqref{6.14.3}, \eqref{6.14.4} and \eqref{6.14.5}.

(viii) Suppose that $T=+\infty.$  The proof is divided into four cases.

\textbf{Case 1.} Assume that condition (a) is satisfied. We deduce from \eqref{6.16.1} that
\begin{equation}\label{6.16.2}
\int_0^T|w|^p \psi_T dt\leq K_1T^{1-\frac{p\gamma}{p-1}}+K_2T^{1-\frac{p(\alpha+\gamma)}{p-1}}
+|w(0)|T^{1-\alpha-\gamma}+|w'(0)|T^{2-\alpha-\gamma}.
\end{equation}
Thus, if $p(1-\gamma)<1$, then taking $T\rightarrow +\infty$, we get $\int_0^{+\infty}|w|^p  dt=0 $ by \eqref{6.16.2}. This implies $w\equiv0$, which contradicts the assumption that $w\not\equiv0$.

If $p(1-\gamma)=1$, then using \eqref{6.16.2} and taking $T\rightarrow +\infty$, we obtain that $\int_0^{+\infty}|w|^p  dt<+\infty.$ Additionally, if $\gamma\geq\frac{\alpha}{2}$, Property (iii) implies that there exist constants $C>0$ and $L>0$ such that $w(t)\geq Ct^{\gamma-1}$ for $t\geq L$, which yields that
\[C\int_L^{+\infty}t^{-1}dt=C\int_L^{+\infty}t^{-p(1-\gamma)}\leq \int_0^{+\infty}|w|^p  dt.\]
This contradicts $\int_0^{+\infty}|w|^p  dt<+\infty.$ On the other hand,  if $0<\gamma<\frac{\alpha}{2}$, Property (vi) implies that there exist constants $C>0$ and $\tilde{L}>0$ such that ${}_0I_t^\beta w(t)\geq Ct^{\gamma+\beta-1}$ for $t\geq \tilde{L}$, where $\beta$ satisfies $\frac{\alpha}{2}\leq \gamma+\beta<1$ and $\beta<\alpha$.
By the weighted estimate of the operator ${}_0I_t^\beta$, the inequality
\[
\Big(\int_0^{+\infty}t^{-\beta p} |{}_0I_t^\beta w|^p dt\Big)^{\frac{1}{p}} \leq \frac{\Gamma(1-1/ p)}{\Gamma(\beta+1-1/p)}\Big(\int_0^{+\infty}|w(t)|^p  dt\Big)^{\frac{1}{p}}
\]
is valid (see Lemma 2.3 (a) in \cite{kilbas2006} or Exercise 28 in Chapter 6 of \cite{folland}). As a result,
\[
C\int_{\tilde{L}}^{+\infty}t^{-1} dt= C\int_{\tilde{L}}^{+\infty}t^{-\beta p-p(1-\gamma)+p\beta} dt\leq\Big(\frac{\Gamma(1-1/ p)}{\Gamma(\beta+1-1/p)}\Big)^p\int_0^{+\infty}|w(t)|^p  dt,
\]
which again contradicts $\int_0^{+\infty}|w|^p  dt<+\infty.$

\textbf{Case 2.} Assume that condition (b) is satisfied, that is $\alpha+\gamma\leq 2$, $w'(0)=0$ and $p(1-\gamma)\leq 1$. Then it follows from \eqref{6.16.1} that
\[
\int_0^T|w|^p \psi_T dt\leq K_1T^{1-\frac{p\gamma}{p-1}}+K_2T^{1-\frac{p(\alpha+\gamma)}{p-1}}
+|w(0)|T^{1-\alpha-\gamma}.
\]
If $p(1-\gamma)<1$, then we have $w\equiv0$ by taking $T\rightarrow+\infty$. This contradicts the assumption that $w\not\equiv0$. If $p(1-\gamma)=1$, then using Property(iv)(vi) and repeating the arguments in case 1, we can also obtain a contradiction.

\textbf{Case 3.} Assume that condition (d) is satisfied. Then from \eqref{6.16.1} we have
\[
w'(0)T^{2-\alpha-\gamma}\leq K_1T^{1-\frac{p\gamma}{p-1}}+K_2T^{1-\frac{p(\alpha+\gamma)}{p-1}}
+|w(0)|T^{1-\alpha-\gamma}.
\]
In other words,
\begin{equation}\label{6.16.5}
w'(0)\leq K_1T^{-1-\frac{\gamma}{p-1}+\alpha}+K_2T^{-1-\frac{\alpha+\gamma}{p-1}}
+|w(0)|T^{-1}.
\end{equation}
Observe that $\alpha-1<\frac{\gamma}{p-1}$. Thus taking $T\rightarrow +\infty$, we deduce from \eqref{6.16.5} that $w'(0)=0$, which contradicts the assumption that $w'(0)>0$.

\textbf{Case 4.} Assume that condition (c) holds. For the case $p(1-\gamma)<1$, the assumption $\alpha+\gamma=2$ yields $\alpha-1<\frac{\gamma}{p-1}$. Thus $T<+\infty$ by case 3. For the case $p(1-\gamma)=1$, we can also obtain a contradiction by repeating the arguments in case 1 and using Property(v), (vii). Hence $T<+\infty$.
\end{proof}
\begin{remark}
In \cite{T.Cazenave}, for problem \eqref{1.7-1} with $\alpha=1$, some similar results were proved by comparison technique and shifting the time. The authors of \cite{zhangli3} overcame the technical difficulty, given by the memory effect of the equation, and extended the results to the time fractional case ($0<\alpha<1$). Their proof depends on the nonnegativity of the Mittag-Leffler function $E_{\alpha,\alpha}(t)$ for $t<0$. But, in the case $1<\alpha<2$, $E_{\alpha,\alpha}(t)$ is not nonnegative on $(-\infty,0)$. Hence, the method using in \cite{T.Cazenave,zhangli3} could not be applied to study problem \eqref{1.7-1}. Here, we consider the asymptotic behavior of $_0I_t^\beta w$ for some $\beta>0$ instead of $w$ in some cases. Accordingly, we can prove Property(viii), which is crucial to prove our blow-up results.
\end{remark}
\begin{corollary}\label{coro1}
Let $T>0$, $\gamma>0$, $1<\alpha<2$,  $p>1$, $a,b>0$, and  $\beta\in [\frac{\alpha}{2},\alpha)$. If $w\in C^1([0,T])$, and $w\not \equiv0$ satisfies that $_0I_t^{2-\alpha}(w-w'(0)t-w(0))\in AC^2([0,T])$ and for almost every $t\in [0,T]$,
\begin{equation}\label{6.16.2-1}
_0D_t^\alpha w+aw\geq b\cdot {}_0^{}I_t^{\gamma}|w|^p,
\end{equation}
then the following properties hold.
\begin{enumerate}
  \item[\rm (i)] There exist positive constants $K_1$ and $K_2$ independent of $T$ such that \[Tw'(0)+w(0)\leq K_1T^{\alpha+\gamma-\frac{p\gamma}{p-1}}
      +K_2T^{-\frac{\alpha+\gamma}{p-1}}.\]
\item[\rm (ii)] If $T=+\infty,$  then $\liminf_{t\rightarrow +\infty}|w(t)|=0$ and $\liminf_{t\rightarrow +\infty}t^{\min\{\frac{\alpha+\gamma-1}{p},\frac{\gamma}{p-1}\}}|w(t)|<+\infty$.

\item[\rm (iii)]If $T=+\infty,$ $w'(0)=0$, then $\liminf_{t\rightarrow +\infty}t^{1-\gamma-\beta} {}_0I_t^\beta w>0$.

\item[\rm(iv)] If $T=+\infty,$ $\alpha+\gamma\leq 2$ and $w'(0)>0$, then $\liminf_{t\rightarrow +\infty}t^{\alpha-\beta-1} {}_0I_t^\beta w>0$.

\item[\rm(v)] If $T=+\infty,$ $\alpha+\gamma>2$, then $\liminf_{t\rightarrow +\infty}t^{1-\gamma-\beta} {}_0I_t^\beta w>0$.

\item[\rm(vi)] If one of the following conditions is
satisfied:
\begin{enumerate}
\item[\rm(a)] $\alpha+\gamma> 2$ and $p(1-\gamma)\leq 1$;
\item[\rm(b)] $\alpha+\gamma\leq 2$, $w'(0)=0$ and $p(1-\gamma)\leq 1$;
\item[\rm(c)] $\alpha+\gamma=2$, $w'(0)>0$ and $p(1-\gamma)\leq 1$;
\item[\rm(d)] $\alpha+\gamma<2$, $w'(0)>0$ and $p<1+\frac{\gamma}{\alpha-1}$,
\end{enumerate}
then $T<+\infty$.
\end{enumerate}
\end{corollary}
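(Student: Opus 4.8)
The plan is to follow the proof of Lemma \ref{lemma17} almost verbatim, observing that the inequality hypothesis \eqref{6.16.2-1} is exactly what is needed to reproduce its weak formulation and its representation-based estimates. The one structural simplification is that here a single integration order $\beta\in[\frac{\alpha}{2},\alpha)$ does the work of the two separate regimes $\gamma\geq\frac{\alpha}{2}$ and $0<\gamma<\frac{\alpha}{2}$ treated in Lemma \ref{lemma17}(iii)--(vii). For (i) and (ii) I would argue as in Lemma \ref{lemma17}(i),(ii): multiplying \eqref{6.16.2-1} by a nonnegative test function $\varphi\in AC^2([0,T])$ with $\varphi(T)=\varphi'(T)=0$ and integrating, the integration-by-parts formula \eqref{112-3} (Lemma \ref{l2}) together with \eqref{112-4} yields the weak inequality \eqref{6.10-2}. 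Choosing $\varphi={}_tD_T^\gamma\psi_T$ with $\psi_T=(1-t/T)^l$, using \eqref{113-2}--\eqref{113-3} and Young's inequality gives \eqref{6.16.1}, whence (i) is immediate and (ii) follows by the same two contradiction arguments (a lower bound for $\int_{T/2}^T\psi_T$ and a minimizing sequence $t_n$).

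The heart of the matter is the master lower bound underlying (iii)--(v). Writing $g:={}_0D_t^\alpha w+aw$, so that $g\geq b\,{}_0I_t^\gamma|w|^p\geq0$ by \eqref{6.16.2-1}, I would begin from the variation-of-constants representation
\[
w(t)=E_\alpha(-at^\alpha)w(0)+tE_{\alpha,2}(-at^\alpha)w'(0)+\int_0^t(t-s)^{\alpha-1}E_{\alpha,\alpha}(-a(t-s)^\alpha)g(s)\,ds,
\]
apply ${}_0I_t^\beta$, and use \eqref{7.25.1} on the first two terms and the semigroup computation in \eqref{6.16.8} on the third to get
\[
{}_0I_t^\beta w=t^\beta E_{\alpha,1+\beta}(-at^\alpha)w(0)+t^{1+\beta}E_{\alpha,2+\beta}(-at^\alpha)w'(0)+\int_0^t(t-s)^{\alpha+\beta-1}E_{\alpha,\alpha+\beta}(-a(t-s)^\alpha)g(s)\,ds.
\]
Since $\beta\geq\frac{\alpha}{2}$ forces $\alpha+\beta\geq\frac{3\alpha}{2}$, Theorem 2 in \cite{Pskhu} makes the kernel $E_{\alpha,\alpha+\beta}(-\cdot)$ nonnegative; replacing $g$ by $b\,{}_0I_t^\gamma|w|^p$ and shifting the index once more by $\gamma$ (again via \eqref{6.16.8}) produces
\[
{}_0I_t^\beta w\geq t^\beta E_{\alpha,1+\beta}(-at^\alpha)w(0)+t^{1+\beta}E_{\alpha,2+\beta}(-at^\alpha)w'(0)+b\int_0^t(t-s)^{\alpha+\beta+\gamma-1}E_{\alpha,\alpha+\beta+\gamma}(-a(t-s)^\alpha)|w(s)|^p\,ds,
\]
the exact analogue of \eqref{6.14.3}.

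To extract (iii)--(v) I would localize on an interval $[t_0,t_0+\delta]$ where $w\neq0$ (possible since $w\not\equiv0$) and insert the asymptotics \eqref{100.1}, i.e. \eqref{6.14.4}--\eqref{6.14.5}: the $w(0)$-term is $O(t^{\beta-\alpha})$, the $w'(0)$-term is $\sim t^{1+\beta-\alpha}$, and the memory term is $\sim t^{\beta+\gamma-1}$. Multiplying by $t^{1-\gamma-\beta}$ handles (iii) (where $w'(0)=0$ removes the middle term) and (v) (where $\alpha+\gamma>2$ makes $t^{2-\gamma-\alpha}\to0$, killing the middle term regardless of the sign of $w'(0)$), leaving the positive memory constant; multiplying by $t^{\alpha-\beta-1}$ handles (iv), where $\alpha+\gamma\leq2$ and $w'(0)>0$ make the $w'(0)$-term dominant. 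For (vi) I would assume $T=+\infty$ and use \eqref{6.16.1}. If $p(1-\gamma)<1$, every term on the right of \eqref{6.16.1} carries a negative power of $T$ under (a),(b), forcing $\int_0^\infty|w|^p=0$ and contradicting $w\not\equiv0$; under (d) and under (c) with $p(1-\gamma)<1$, the equivalence $p<1+\frac{\gamma}{\alpha-1}\Leftrightarrow\alpha-1<\frac{\gamma}{p-1}$ makes the $w'(0)T^{2-\alpha-\gamma}$ term dominate, forcing $w'(0)\leq0$, again a contradiction. If $p(1-\gamma)=1$, \eqref{6.16.1} only gives $\int_0^\infty|w|^p<+\infty$; here I would feed the lower bound from (iii)/(iv)/(v)—namely ${}_0I_t^\beta w\geq Ct^{\gamma+\beta-1}$, using that $\alpha+\gamma=2$ converts the (iv)-exponent $\beta+1-\alpha$ into $\gamma+\beta-1$ in case (c)—into the weighted estimate for ${}_0I_t^\beta$ (Lemma 2.3(a) in \cite{kilbas2006}), obtaining $\int_{\tilde L}^\infty t^{-1}\,dt\leq C\int_0^\infty|w|^p\,dt<\infty$, a contradiction.

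I expect the main obstacle to be precisely the one flagged in the Remark: because $E_{\alpha,\alpha}(-\cdot)$ changes sign for $1<\alpha<2$, the representation for $w$ itself does not support a comparison argument, so none of the $\liminf$ estimates can be read off directly. The decisive device is therefore the $\beta$-integration, which lifts the Mittag–Leffler index into the positivity range $[\frac{3\alpha}{2},\infty)$ of \cite{Pskhu}; once the master lower bound is secured, (iii)--(vi) become bookkeeping with the power asymptotics \eqref{6.14.4}--\eqref{6.14.5} and, in the critical case $p(1-\gamma)=1$, the Hardy-type weighted inequality.
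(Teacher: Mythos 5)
Your proposal is correct and takes essentially the same route as the paper: properties (i)--(ii) are reused from the test-function argument of Lemma \ref{lemma17}, and the key step is exactly the paper's---apply ${}_0I_t^\beta$ to the variation-of-constants representation with $g={}_0D_t^\alpha w+aw$, invoke Pskhu's positivity for the Mittag-Leffler index $\alpha+\beta\geq\frac{3\alpha}{2}$ to get the master lower bound with kernel $E_{\alpha,\alpha+\beta+\gamma}$. The asymptotic bookkeeping for (iii)--(v) and the Hardy-type weighted inequality in the critical case $p(1-\gamma)=1$ that you spell out are precisely the details the paper omits by referring back to Lemma \ref{lemma17}(vi)--(viii).
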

\begin{proof}
According to the proof of Lemma \ref{lemma17}(i)(ii), we know that Property (i) and (ii) also hold in this case.

Denote $_0D_t^\alpha w+aw=g(t)$. Then
\begin{align}\label{6.16.7}
w(t)= E_\alpha(-at^\alpha)w(0)+tE_{\alpha,2}(-at^\alpha)w'(0)+
\int_0^t(t-s)^{\alpha-1}E_{\alpha,\alpha}(-a(t-s)^\alpha)g(s) ds.
\end{align}
Take $\beta\in [\frac{\alpha}{2},\alpha)$. It follows from \eqref{6.16.7}, \eqref{7.25.1} and \eqref{6.16.8} that
\begin{align}\label{6.17.1}
{}_0I_t^\beta w=t^\beta E_{\alpha,1+\beta}(-at^\alpha)w(0)+t^{1+\beta}E_{\alpha,2+\beta}(-at^\alpha)w'(0)+
\int_0^t (t-s)^{\alpha+\beta-1}E_{\alpha,\alpha+\beta}(-a(t-s)^\alpha)g(s) ds.
\end{align}
Noting that $2+\beta>\alpha+\beta\geq \frac{3\alpha}{2}$, we have $E_{\alpha,2+\beta}(-z)>0$ and $E_{\alpha,\alpha+\beta}(-z)>0$ for every $z\in \mathbb{R}$ ( see Theorem 2 in \cite{Pskhu}). Hence we conclude from \eqref{6.16.2-1}, \eqref{6.17.1} and \eqref{8.11.1} that
\begin{align*}
{}_0I_t^\beta w\geq &t^\beta E_{\alpha,1+\beta}(-at^\alpha)w(0)+t^{1+\beta}E_{\alpha,2+\beta}(-at^\alpha)w'(0)\\
&+b\int_0^t (t-s)^{\alpha+\beta-1}E_{\alpha,\alpha+\beta}(-a(t-s)^\alpha){}_0I_s^\gamma |w|^p ds\\
=& t^\beta E_{\alpha,1+\beta}(-at^\alpha)w(0)+t^{1+\beta}E_{\alpha,2+\beta}(-at^\alpha)w'(0)\\
&+b\int_0^t (t-s)^{\alpha+\beta+\gamma-1}E_{\alpha,\alpha+\beta+\gamma}(-a(t-s)^\alpha)|w(s)|^p ds.
\end{align*}
The rest of proof is similar to that for Lemma \ref{lemma17} (vi)-(viii), so
we omit it.
\end{proof}

When $\alpha=2$, we have the following results.
\begin{corollary}\label{coro2}
Let $T>0$, $\gamma>0$,  $p>1$, $a,b>0$, and  $\beta\geq 2$. If $w\not\equiv0$ satisfies that for almost every $t\in [0,T]$,
\begin{equation}\label{6.16.2-2}
w''+aw\geq b\cdot {}_0^{}I_t^{\gamma}|w|^p,
\end{equation}
then the following properties hold.
\begin{enumerate}
\item[\rm (i)] There exist positive constants $K_1$ and $K_2$ independent of $T$ such that \[Tw'(0)+w(0)\leq K_1T^{2+\gamma-\frac{p\gamma}{p-1}}
      +K_2T^{-\frac{2+\gamma}{p-1}}.\]
\item[\rm (ii)] If $T=+\infty,$  then $\liminf_{t\rightarrow +\infty}|w(t)|=0$ and $\liminf_{t\rightarrow +\infty}t^{\min\{\frac{1+\gamma}{p},\frac{\gamma}{p-1}\}}|w(t)|<+\infty$.

\item[\rm(iii)] If $T=+\infty,$ then $\liminf_{t\rightarrow +\infty}t^{1-\gamma-\beta} {}_0I_t^\beta w>0$.

\item[\rm(iv)] If $p(1-\gamma)\leq 1$, then $T<+\infty$.
\end{enumerate}
\end{corollary}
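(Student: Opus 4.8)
The plan is to run the argument of Lemma~\ref{lemma17} and Corollary~\ref{coro1} with the value $\alpha=2$, the point being that the larger smoothing index $\beta\ge 2$ forces the nonlinear memory term to dominate every initial-data contribution, so that all the case distinctions present for $1<\alpha<2$ collapse into the single statements (iii) and (iv). Throughout set $g:=w''+aw$, so that $g\ge b\,{}_0I_t^\gamma|w|^p\ge0$ almost everywhere. For (i) and (ii) there is essentially nothing new: Lemma~\ref{l2} is stated for $1<\alpha\le2$, and the test-function computations \eqref{113-2}--\eqref{113-3} hold verbatim at $\alpha=2$, so the weak inequality \eqref{6.10-2} and the fundamental estimate \eqref{6.16.1} are available, the latter now reading $\int_0^T|w|^p\psi_T\,dt+w(0)T^{-1-\gamma}+w'(0)T^{-\gamma}\le K_1T^{1-\frac{p\gamma}{p-1}}+K_2T^{1-\frac{p(2+\gamma)}{p-1}}$. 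Multiplying by $T^{1+\gamma}$ and discarding the nonnegative integral yields (i), and the contradiction-plus-minimizing-sequence argument of Lemma~\ref{lemma17}(ii) gives (ii) with $\alpha+\gamma-1$ replaced by $1+\gamma$.

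The heart of the matter is (iii). Variation of parameters gives the $\alpha=2$ form of \eqref{6.16.7}, namely $w(t)=\cos(\sqrt a\,t)\,w(0)+\frac{\sin(\sqrt a\,t)}{\sqrt a}\,w'(0)+\frac1{\sqrt a}\int_0^t\sin(\sqrt a(t-s))g(s)\,ds$; applying ${}_0I_t^\beta$ and using \eqref{7.25.1} and \eqref{6.16.8} produces the analogue of \eqref{6.17.1}, after which the bound $g\ge b\,{}_0I_s^\gamma|w|^p$ inserted into the convolution gives
\begin{align*}
{}_0I_t^\beta w\ge\ & t^\beta E_{2,1+\beta}(-at^2)w(0)+t^{1+\beta}E_{2,2+\beta}(-at^2)w'(0)\\
&+b\int_0^t(t-s)^{1+\beta+\gamma}E_{2,2+\beta+\gamma}(-a(t-s)^2)|w(s)|^p\,ds.
\end{align*}
The inequality in the last step requires the memory kernel to be nonnegative, i.e. $E_{2,2+\beta+\gamma}(-x)\ge0$ for $x\ge0$. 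Granting this, I would drop the integral to a fixed interval $[t_0,t_0+\delta]$ on which $w$ does not vanish and invoke $\lim_{t\to\infty}t^2E_{2,2+\beta+\gamma}(-at^2)=\frac1{a\Gamma(\beta+\gamma)}$ (from \eqref{100.2}) to bound the memory term below by $Ct^{\beta+\gamma-1}$, while \eqref{100.2} shows both initial-data terms are $O(t^{\beta-1})$. Since $\gamma>0$, the memory term dominates irrespective of the signs of $w(0)$ and $w'(0)$, so no hypothesis on $w'(0)$ is needed and $\liminf_{t\to\infty}t^{1-\gamma-\beta}\,{}_0I_t^\beta w>0$.

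For (iv), suppose $T=+\infty$. If $p(1-\gamma)<1$ then $1-\frac{p\gamma}{p-1}<0$, and letting $T\to\infty$ in the fundamental estimate forces $\int_0^\infty|w|^p\,dt=0$, i.e. $w\equiv0$, contradicting $w\not\equiv0$. If $p(1-\gamma)=1$ the same limit gives only $\int_0^\infty|w|^p\,dt<+\infty$; combining the lower bound ${}_0I_t^\beta w\ge Ct^{\beta+\gamma-1}$ from (iii) with the weighted estimate $\big(\int_0^\infty t^{-\beta p}|{}_0I_t^\beta w|^p\,dt\big)^{1/p}\le\frac{\Gamma(1-1/p)}{\Gamma(\beta+1-1/p)}\big(\int_0^\infty|w|^p\,dt\big)^{1/p}$ (Lemma~2.3(a) of \cite{kilbas2006}) yields $C\int_{\tilde L}^\infty t^{-1}\,dt\le C'\int_0^\infty|w|^p\,dt<\infty$, a contradiction. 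Hence $T<+\infty$. Note that, unlike in Lemma~\ref{lemma17}(viii), a single condition $p(1-\gamma)\le1$ suffices precisely because (iii) holds unconditionally here.

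The main obstacle is the nonnegativity $E_{2,2+\beta+\gamma}(-x)\ge0$ on $[0,\infty)$ used in (iii): unlike the case $1<\alpha<2$, at $\alpha=2$ the Mittag--Leffler function genuinely oscillates, its asymptotic \eqref{100.2} carrying a $\cos(\sqrt x+\cdots)$ term, so positivity is not automatic and is exactly what makes the memory kernel nonnegative. I would obtain it as the endpoint $\alpha\to2^-$ of Theorem~2 of \cite{Pskhu}: since $2+\beta+\gamma>4>3$, for $\alpha<2$ sufficiently close to $2$ one has $\tfrac{3\alpha}{2}<2+\beta+\gamma$, whence $E_{\alpha,2+\beta+\gamma}(-x)>0$, and passing to the limit (the defining series converging by dominated convergence since $\Gamma(\alpha k+2+\beta+\gamma)$ is eventually increasing in $k$ uniformly for $\alpha$ near $2$) gives $E_{2,2+\beta+\gamma}(-x)\ge0$.
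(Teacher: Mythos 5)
Your proposal is correct and follows essentially the paper's own route: (i)--(ii) by rerunning the test-function estimate \eqref{6.16.1} at $\alpha=2$, (iii) by applying ${}_0I_t^\beta$ to the variation-of-parameters formula and using nonnegativity of the Mittag--Leffler kernels together with the asymptotics \eqref{100.2} to show the memory term $\gtrsim t^{\beta+\gamma-1}$ dominates the $O(t^{\beta-1})$ initial-data terms, and (iv) by combining (iii) with the weighted estimate for ${}_0I_t^\beta$ exactly as in Case 1 of Lemma \ref{lemma17}(viii) --- the only presentational difference being that the paper invokes Theorem 2 of \cite{Pskhu} directly at $\alpha=2$ (second parameter $\geq 3$) rather than via your limiting argument $\alpha\to 2^-$, which is also a legitimate justification. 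One small correction: inserting $g\geq b\,{}_0I_s^\gamma|w|^p$ into the convolution requires nonnegativity of the kernel actually present at that step, namely $E_{2,2+\beta}(-x)\geq 0$, in addition to $E_{2,2+\beta+\gamma}(-x)\geq 0$ (the latter is what you need afterwards to discard the integral outside $[t_0,t_0+\delta]$); since $2+\beta\geq 4>3$, your own positivity argument covers this kernel as well, so nothing in the proof breaks.
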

\begin{proof}
By the proof of Lemma \ref{lemma17}, we know that Property (i) and (ii) also hold in the case $\alpha=2$.

(iii) Denote $w''+aw=g(t)$. Then
\begin{align*}
w(t)= E_2(-at^2)w(0)+tE_{2,2}(-at^2)w'(0)+
\int_0^t(t-s)E_{2,2}(-a(t-s)^2)g(s) ds.
\end{align*}
Since $2+\beta>1+\beta\geq 3$, we know $E_{2,1+\beta}(-z)\geq0$, $E_{2,2+\beta}(-z)>0$ and $E_{2,2+\beta}(-z)>0$ for every $z\in \mathbb{R}$ ( see Theorem 2 in \cite{Pskhu}). Then it follows from \eqref{6.17.1} and \eqref{6.16.2-2} that
\begin{align*}
{}_0I_t^\beta w\geq& t^\beta E_{2,1+\beta}(-at^2)w(0)+t^{1+\beta}E_{2,2+\beta}(-at^2)w'(0)+ b\int_0^t (t-s)^{1+\beta}E_{2,2+\beta}(-a(t-s)^2){}_0I_s^\gamma |w|^p ds\\
=& t^\beta E_{2,1+\beta}(-at^2)w(0)+t^{1+\beta}E_{2,2+\beta}(-at^2)w'(0)\\
&+ b\int_0^t (t-s)^{1+\beta+\gamma}E_{2,2+\beta+\gamma}(-a(t-s)^2)|w(s)|^pds.
\end{align*}
Since $w\not\equiv0$, there exist $t_0\in [0,T)$ and $\delta>0$  such that  $w(t)\not=0$ for $t\in [t_0,t_0+\delta]$. Observe that $\beta-1<\beta+\gamma-1$, and  \eqref{100.2} yields that
\begin{equation*}
t^{2}E_{2,2+\beta+\gamma}(-at^2)= a^{-\frac{1+\beta+\gamma}{2}}t^{1-\beta-\gamma}\cos(t-\frac{(1+\beta+\gamma)\pi}{2}) +\frac{1}{a\Gamma(\beta+\gamma)}+O(\frac{1}{t^{2}}),
\end{equation*}
as $t\rightarrow +\infty$.
Then by an argument similar to the proof of estimate \eqref{6.14.1}, we have
\[{}_0I_t^\beta w\geq t^\beta E_{2,1+\beta}(-at^2)w(0)+t^{1+\beta}E_{2,2+\beta}(-at^2)w'(0)+\frac{b\delta t^{\beta+\gamma-1}}{2a\Gamma(\beta+\gamma)}\min_{t\in [t_0,t_0+\delta]}|w(t)|^p.\]
Furthermore, invoking $\beta-1<\beta+\gamma-1$ and noting that \eqref{100.2} implies that
\begin{equation*}
t^\beta E_{2, 1+\beta}(-at^2)= a^{-\frac{\beta}{2}}\cos(t-\frac{\beta\pi}{2})+ \frac{1}{a\Gamma(\beta-1)}t^{\beta-2}+O(\frac{1}{t^{4-\beta}}),
\end{equation*}
\begin{equation*}
t^{1+\beta}E_{2,2+\beta}(-at^2)= a^{-\frac{1+\beta}{2}}\cos(t-\frac{(1+\beta)\pi}{2}) +\frac{1}{a\Gamma(\beta)}t^{\beta-1}+O(\frac{1}{t^{3-\beta}}),
\end{equation*}
as $t\rightarrow +\infty$, we can obtain ${}_0I_t^\beta w\geq Ct^{\beta+\gamma-1}$ for $t$ large enough. This completes the proof.

(iv) The proof is similar to that of Lemma \ref{lemma17}(viii), so we omit it.
\end{proof}

\section{Finite time blow-up and Global existence}
\noindent

In this section, we give the local existence result of problem \eqref{20.1}, and prove sharp results on the blow-up and global existence of solution for  problem \eqref{20.1}.

Let $\lambda_1>0$ be the first eigenvalue of $-\triangle$ in $H_0^1(\Omega)$. We denote by $\varphi_1$ the corresponding positive eigenfunction with $\int_\Omega\varphi_1(x)dx=1$. Denote $-A=\triangle$. Let $1<q\leq +\infty$, and consider the operator $A$ defined on
\[D(A)=\left\{
         \begin{array}{ll}
           \{u\in \bigcap_{r\geq1}W^{2,r}(\Omega)\ | \ u,Au\in L^\infty(\Omega), u=0 \text{ on }\partial\Omega \}, & \hbox{if $q=\infty$,} \\
           \{u\in W_0^{1,q}(\Omega)\ |\ Au\in L^q(\Omega)\}, & \hbox{if  $1<q<+\infty$.}
         \end{array}
       \right.
\]

First, we give the definitions of the solution operators, which are similar to those in \cite{Zhangli4}.
\begin{definition}\label{def2}
Let $\alpha\in (1,2)$, $1<q\leq +\infty$.
For every $u_0\in L^q(\Omega)$, we define the operators $P_\alpha(t)$ and $S_\alpha(t)$ by
\begin{equation}\label{60.1}
P_\alpha(t)u_0=
\frac{1}{2\pi i}\int_\Gamma E_\alpha(\lambda t^\alpha)(\lambda I+A)^{-1}u_0d\lambda,\ t>0,\text{ and } P_\alpha(0)u_0=u_0,
\end{equation}
\begin{equation}\label{60.2}
S_\alpha(t)u_0=\frac{1}{2\pi i}\int_\Gamma E_{\alpha,\alpha}(\lambda t^\alpha)(\lambda I+A)^{-1}u_0d\lambda,\ t>0, \text{ and } S_\alpha(0)u_0=\frac{u_0}{\Gamma(\alpha)},
\end{equation}
where $\Gamma\in\{\gamma(\varepsilon,\varphi)\subseteq \rho(-A)\ |\ \varepsilon>0,\  \varphi \text{ satisfies }0<\varphi<\pi,\ \frac{\pi\alpha}{2}<\arg(-\lambda_1+\varepsilon e^{i\varphi})<\pi\}$. Here $\gamma(\varepsilon,\varphi)=\{re^{i\arg(-\lambda_1+\varepsilon e^{i\varphi})}|r\geq |-\lambda_1+\varepsilon e^{i\varphi}|\}\cup\{re^{-i\arg(-\lambda_1+\varepsilon e^{i\varphi})}|r\geq |-\lambda_1+\varepsilon e^{-i\varphi}|\}\cup\{-\lambda_1+\varepsilon e^{i\theta}|-\varphi\leq\theta\leq\varphi\}$.
\end{definition}
\begin{remark}
According to \eqref{100.1} and Cauchy's integral theorem, $P_\alpha(t)$ and $S_\alpha(t)$ are independent of $\varphi$ and $\varepsilon$, and then are well defined.
\end{remark}

By making use of the complex integral representations of the solution operators, we can derive some estimates of the operators $P_\alpha(t)$ and $S_\alpha(t)$.
\begin{lemma}\label{lemma100}
The operators $P_\alpha(t)$ and $S_\alpha(t)$ have the following properties.
\begin{enumerate}
\item[\rm{(i)}] For $u_0\in L^\infty(\Omega)$,  we have $P_\alpha(t)u_0\in C((0,+\infty), L^\infty(\Omega))\cap C([0,+\infty), L^q(\Omega))$ for every $q\in (1,+\infty)$, $\lim_{t\rightarrow 0^+}P_\alpha(t)u_0=u_0$ in the weak-star topology of $L^\infty(\Omega)$. Moreover, if $u_0\in L^s(\Omega) $ $(1<s\leq +\infty)$, then there exists a constant $C>0$ such that for $t\geq 0$,
\begin{equation}\label{100.7}
\|P_\alpha(t)u_0\|_{L^s(\Omega)}\leq \frac{C}{1+t^\alpha}\|u_0\|_{L^s(\Omega)},\ \ \ \|{}_0I_t^1P_\alpha(t)u_0\|_{L^s(\Omega)}\leq \frac{C}{1+t^{\alpha-1}}\|u_0\|_{L^s(\Omega)}.
\end{equation}
\item[\rm (ii)] For $u_0\in L^\infty(\Omega)$, $0<\gamma\leq 1$ and $t>0$, we have
$
S_\alpha(t)u_0=t^{1-\alpha}{}_0I_t^{\alpha-1}P_\alpha(t)u_0,
$
\[
{}_0I_t^\gamma [t^{\alpha-1}S_\alpha(t)u_0]={}_0I_t^{\alpha+\gamma-1}P_\alpha(t)u_0=\frac{t^{\alpha+\gamma-1}}{2\pi i}\int_\Gamma E_{\alpha,\alpha+\gamma}(\lambda t^\alpha)(\lambda I+A)^{-1}u_0d\lambda,
\]
$t^{\alpha-1}S_\alpha(t)u_0\in C([0,+\infty), L^\infty(\Omega))$ and $S_\alpha(t)u_0\in C([0,+\infty), L^q(\Omega))$ for every $q\in (1,+\infty)$.
Moreover, if $u_0\in L^s(\Omega) $ $(1<s\leq +\infty)$, then there exists a constant $C>0$ such that for $t\geq0$,
\begin{equation*}
\|S_\alpha(t)u_0\|_{L^s(\Omega)}\leq \frac{C}{1+t^{2\alpha}}\|u_0\|_{L^s(\Omega)},\ \
\|{}_0I_t^\gamma [t^{\alpha-1}S_\alpha(t)u_0]\|_{L^s(\Omega)}\leq \frac{C}{1+t^{1-\gamma}}\|u_0\|_{L^s(\Omega)}.
\end{equation*}
\item[{\rm (iii)}] If $u_0\in L^\infty(\Omega)$, then $P_\alpha(t)u_0\in C^1((0,+\infty),L^\infty(\Omega))$.
Moreover, $\int_\Omega (P_\alpha(t)u_0)vdx\in C^1([0,+\infty))$ for every $v\in D(A).$
\item[{\rm(iv)}] Let $T>0$, $q\geq1$ and $w=\int_0^t(t-s)^{\alpha-1}S_\alpha(t-s)f(s)ds$.
If $f\in L^q((0,T),L^r(\Omega))$ for some $r\in (1,+\infty)$
then $w\in C([0,T],L^r(\Omega))$. Furthermore, if $q(\alpha-1)>1,$ then $w\in C^{1,\alpha-1-\frac{1}{q}}([0,T],L^r(\Omega)).$
\end{enumerate}
\end{lemma}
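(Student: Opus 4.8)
The plan is to reduce every assertion to the contour representations \eqref{60.1}--\eqref{60.2} together with two analytic inputs. First, since $-A=\triangle$ with Dirichlet boundary conditions is sectorial on $L^s(\Omega)$ for each $1<s\le+\infty$ with $\sigma(A)\subset[\lambda_1,+\infty)$, and $\Gamma$ is chosen in $\rho(-A)$ with ray angle larger than $\frac{\pi\alpha}{2}$, I would first record the uniform resolvent bound $\|(\lambda I+A)^{-1}\|_{L^s\to L^s}\le C(1+|\lambda|)^{-1}$ for all $\lambda\in\Gamma$: on the arc $\Gamma$ stays a fixed distance from $-\lambda_1$, and on the rays it stays in a sector strictly separated from the spectral half-line, where the distance to $\sigma(-A)$ is comparable to $|\lambda|$. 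Second, \eqref{100.1} gives $|E_{\alpha,\beta}(z)|\le C(1+|z|)^{-1}$ on the sector $\mu\le|\arg z|\le\pi$ swept by $\Gamma$, and for $\beta=\alpha$ the leading coefficient $1/\Gamma(\alpha-\alpha)=1/\Gamma(0)=0$ vanishes, so in fact $|E_{\alpha,\alpha}(z)|\le C(1+|z|)^{-2}$; this extra power of decay is exactly what upgrades the $P_\alpha$-rate $t^{-\alpha}$ to the $S_\alpha$-rate $t^{-2\alpha}$.

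With these bounds the large-$t$ decay is immediate on the fixed contour: parametrizing by $|\lambda|$ and using $|E_\alpha(\lambda t^\alpha)|\le C(1+|\lambda|t^\alpha)^{-1}$ gives $\int_\Gamma\frac{1}{1+|\lambda|t^\alpha}\frac{|d\lambda|}{1+|\lambda|}\le Ct^{-\alpha}$, hence the $P_\alpha$ half of \eqref{100.7}, while the analogous computation with the quadratic decay of $E_{\alpha,\alpha}$ gives $Ct^{-2\alpha}$ for $S_\alpha$. The identities in (ii) are then bookkeeping with the fractional calculus: applying \eqref{7.25.1} with $\beta=1,\gamma=\alpha-1$ termwise under the integral shows ${}_0I_t^{\alpha-1}P_\alpha(t)u_0=t^{\alpha-1}S_\alpha(t)u_0$, whence $S_\alpha(t)u_0=t^{1-\alpha}\,{}_0I_t^{\alpha-1}P_\alpha(t)u_0$; composing with ${}_0I_t^\gamma$ via Lemma \ref{l1}(ii) and \eqref{7.25.1} again turns the kernel into $t^{\alpha+\gamma-1}E_{\alpha,\alpha+\gamma}(\lambda t^\alpha)$, and the same size estimate produces the weighted bound $C(1+t^{1-\gamma})^{-1}$. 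The quantity ${}_0I_t^1P_\alpha$ is handled identically, its kernel being $tE_{\alpha,2}(\lambda t^\alpha)$.

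The delicate point, which I expect to be the main obstacle, is the uniform bound as $t\to0^+$ together with $P_\alpha(0)u_0=u_0$ and $S_\alpha(0)u_0=u_0/\Gamma(\alpha)$: a crude absolute-value estimate of the very same integrals loses a factor $|\log t|$, because on the near part $|\lambda|\lesssim t^{-\alpha}$ one only has $\int^{t^{-\alpha}}(1+|\lambda|)^{-1}|d\lambda|\sim|\log t|$. To remove it I would split $\Gamma$ at $|\lambda|=t^{-\alpha}$. On the far part the genuine decay of $E_\alpha$ is kept, giving an $O(1)$ contribution; on the near part I would subtract the value at $t=0$, writing the integrand as $[E_\alpha(\lambda t^\alpha)-1](\lambda I+A)^{-1}$ plus $(\lambda I+A)^{-1}$, where $|E_\alpha(\lambda t^\alpha)-1|\le C|\lambda|t^\alpha$ makes the first integral $O(1)$. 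For the leftover $\frac{1}{2\pi i}\int_{\Gamma}(\lambda I+A)^{-1}u_0\,d\lambda$ I would use the exact functional-calculus identity $\frac{1}{2\pi i}\int_\Gamma(\lambda I+A)^{-1}\,d\lambda=I$ for the sectorial operator $A$, together with the cancellation of the scalar $\int\frac{d\lambda}{\lambda}$ between the two oppositely oriented rays and the resolvent identity $(\lambda I+A)^{-1}u_0=\frac1\lambda u_0-\frac1\lambda(\lambda I+A)^{-1}Au_0$ on $u_0\in D(A)$, which gains the absolutely integrable factor $|\lambda|^{-2}$, extending to general $u_0\in L^s$ by density. Combining the two regimes yields the uniform majorant $C(1+t^\alpha)^{-1}$, the continuity statements, and the weak-star limit in $L^\infty$; the $L^\infty$ resolvent bound, unavailable from the self-adjoint $L^2$ theory, is precisely where the complex representation is indispensable.

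For (iii) I would differentiate \eqref{60.1} under the integral sign using $\frac{d}{dt}E_\alpha(\lambda t^\alpha)=\lambda t^{\alpha-1}E_{\alpha,\alpha}(\lambda t^\alpha)$; the quadratic decay of $E_{\alpha,\alpha}$ makes the differentiated integral locally uniformly convergent on $(0,+\infty)$, giving $P_\alpha(\cdot)u_0\in C^1((0,+\infty),L^\infty(\Omega))$, while pairing with $v\in D(A)$ and transferring $A$ onto $v$ by self-adjointness produces an integrand that stays integrable up to $t=0$, so $\int_\Omega(P_\alpha(t)u_0)v\,dx\in C^1([0,+\infty))$. Finally, for (iv) the bound $\|(t-s)^{\alpha-1}S_\alpha(t-s)\|_{L^r\to L^r}\le C(t-s)^{\alpha-1}$ from (ii) has an integrable singularity since $\alpha-1\in(0,1)$, so $w\in C([0,T],L^r(\Omega))$ follows from Young's inequality for the operator-valued convolution and dominated convergence; when $q(\alpha-1)>1$, estimating the time-difference of the kernel by H\"older's inequality in $s$ yields the claimed H\"older exponent $\alpha-1-\frac1q$.
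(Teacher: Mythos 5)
Your framework (contour representation, the resolvent bound on $\Gamma$, the asymptotics \eqref{100.1} with the observation that $1/\Gamma(0)=0$ gives $E_{\alpha,\alpha}$ quadratic decay, termwise use of \eqref{7.25.1} for the identities in (ii), and the sketches of (iii) and (iv)) matches the paper, and your large-time estimates are sound. The genuine gap is exactly in the step you flag as delicate: the uniform bound as $t\to 0^+$. Your repair of the near part $|\lambda|\le t^{-\alpha}$ — writing $E_\alpha(\lambda t^\alpha)=[E_\alpha(\lambda t^\alpha)-1]+1$ and handling the leftover $\int(\lambda I+A)^{-1}u_0\,d\lambda$ via a functional-calculus identity plus density — does not close. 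First, the identity $\frac{1}{2\pi i}\int_\Gamma(\lambda I+A)^{-1}d\lambda=I$ is false: the integral is not absolutely convergent, and even as a principal value its value depends on the contour (for scalar $A=a>0$ with rays at angles $\pm\varphi_0$ one gets $\varphi_0/\pi$, not $1$). Second, and decisively, what your resolvent-identity computation actually proves for $u_0\in D(A)$ is a bound of the form $C\big(\|u_0\|_{L^s}+\|Au_0\|_{L^s}\big)$: the ray cancellation controls the $u_0/\lambda$ term, but the remainder $\frac1\lambda(\lambda I+A)^{-1}Au_0$ is absolutely integrable only at the price of the factor $\|Au_0\|_{L^s}$. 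A bound on a dense subspace whose constant involves $\|Au_0\|$ cannot be transferred to all of $L^s(\Omega)$ by density; for general $u_0$ the remainder is only $O(\|u_0\|_{L^s}/|\lambda|)$, which reinstates precisely the $|\log t|$ you set out to remove.

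The missing idea is the paper's: for each fixed $t$, use Cauchy's theorem to replace $\Gamma$ by the scaled contour $\tilde\Gamma$ whose arc has radius $t^{-\alpha}$ (see \eqref{7.28.3}--\eqref{7.28.4}). On the rays $|\lambda|\ge t^{-\alpha}$ the bound $|E_\alpha(\lambda t^\alpha)|\le C(1+|\lambda|t^\alpha)^{-1}$ gives $\int_{t^{-\alpha}}^{\infty}\frac{dr}{r(1+t^\alpha r)}\le C$, and on the arc $|\lambda t^\alpha|=1$, so $E_\alpha$ is bounded there and $\int|d\lambda|/|\lambda|\le C$; this yields $\|P_\alpha(t)u_0\|_{L^s}\le C\|u_0\|_{L^s}$ with no subtraction, no identity, and no density argument. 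The same deformation also rescues your leftover piece: by analyticity of the resolvent in the region between the two contours, $\int_{\Gamma\cap\{|\lambda|\le t^{-\alpha}\}}(\lambda I+A)^{-1}u_0\,d\lambda$ equals an integral over an arc of radius $t^{-\alpha}$, which is trivially $O(\|u_0\|_{L^s})$ — the quantity is indeed uniformly bounded, but only the deformation can see it. Two lesser discrepancies: for the second estimate in \eqref{100.7} the paper argues via the positivity of $E_{\alpha,4}$ and applies ${}_0I_t^1$ to the first estimate, whereas your route through the kernel $tE_{\alpha,2}(\lambda t^\alpha)$ is legitimate once the uniform small-$t$ bound is in hand; and the weak-star limit in $L^\infty$ needs its own argument (the paper takes $\varepsilon>\lambda_1$, uses $\frac{1}{2\pi i}\int_\Gamma E_\alpha(t^\alpha\lambda)\lambda^{-1}d\lambda=1$ and self-adjointness to move $A$ onto $v\in C_0^\infty(\Omega)$), which is in effect your subtraction idea carried out in the weak pairing, where it is sound because $Av$ is then a fixed integrable function.
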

\begin{proof}(i) To prove the first estimate in \eqref{100.7}, we take $\varepsilon_0>0$ small enough such that $\pi-\arcsin \frac{\varepsilon_0}{\lambda_1}>\frac{\pi\alpha}{2}$, and choose $\Gamma=\gamma(\varepsilon_0,\varphi)\in\rho(-A)$ with $\arg(-\lambda_1+\varepsilon_0 e^{i\varphi})\in(\frac{\pi\alpha}{2},\pi)$.
Denote $\varphi_0=\arg(-\lambda_1+\varepsilon_0 e^{i\varphi})$, $r_0=|-\lambda_1+\varepsilon_0 e^{i\varphi}|$  and $\Gamma=\gamma(\varepsilon_0,\varphi)=\Gamma_1\cup\Gamma_2\cup\Gamma_3,$  where $\Gamma_1=\{re^{i\varphi_0}\ |\ r\geq r_0\}$, $\Gamma_2=\{-\lambda_1+\varepsilon_0e^{i\theta}\ |\ -\varphi\leq\theta\leq\varphi\}$ and $\Gamma_3=\{re^{-i\varphi_0}\ |\ r\geq r_0\}.$
Note that $\frac{\pi\alpha}{2}<\pi-\arcsin\frac{\varepsilon_0}{\lambda_1}\leq|\arg z|\leq\pi$ for $z\in \Gamma$. Then, for $u_0\in L^s(\Omega)(1<s\leq +\infty)$ and $t>0$,  we deduce from \eqref{100.1} that there exists a constant $C>0$ such that
\begin{align}\label{7.28.1}
\bigg\|\frac{1}{2\pi i}\int_{\Gamma_k} E_{\alpha}(t^\alpha \lambda)(\lambda I+A)^{-1}u_0d\lambda\bigg\|_{L^s(\Omega)}
&\leq C \int_{\Gamma_k}\frac{1}{1+t^\alpha|\lambda|}
\frac{1}{|\lambda+\lambda_1|}|d\lambda|\|u_0\|_{L^s(\Omega)}\nonumber\\
&\leq C\int_{r_0}^{+\infty}\frac{dr\|u_0\|_{L^s(\Omega)}}{(1+t^\alpha r)\sqrt{(\lambda_1+r\cos\varphi_0)^2+r^2\sin^2\varphi_0}}\nonumber\\
&\leq C\int_{r_0}^{+\infty}\frac{1}{1+t^\alpha r}\frac{1}{r}dr\|u_0\|_{L^s(\Omega)}\nonumber\\
&\leq \frac{C}{t^\alpha}\|u_0\|_{L^s(\Omega)},\ k=1,3,
\end{align}
and
\begin{align}\label{7.28.2}
\bigg\|\frac{1}{2\pi i}\int_{\Gamma_2} E_{\alpha}(t^\alpha \lambda)(\lambda I+A)^{-1}u_0d\lambda\bigg\|_{L^s(\Omega)}
&\leq C \int_{\Gamma_2}\frac{1}{1+t^\alpha|\lambda|}\frac{1}{|\lambda+\lambda_1|}|d\lambda|
\|u_0\|_{L^s(\Omega)}\nonumber\\
&\leq C\int_{-\varphi}^\varphi \frac{1}{1+t^\alpha(\lambda_1-\varepsilon_0)}d\theta\|u_0\|_{L^s(\Omega)}\nonumber\\
&\leq \frac{C}{t^\alpha}\|u_0\|_{L^s(\Omega)}.
\end{align}

On the other hand, by Cauchy's integral theorem, we can take $\Gamma=\tilde{\Gamma}_1\cup\tilde{\Gamma}_2\cup\tilde{\Gamma}_3$, where $\tilde{\Gamma}_1=\{re^{i\tilde{\varphi}}\ |\ r\geq\frac{1}{t^\alpha}\}$, $\tilde{\Gamma}_2=\{\frac{1}{t^\alpha}e^{i\theta}\ |\ -\tilde{\varphi}\leq\omega\leq\tilde{\varphi}\}$ and $\tilde{\Gamma}_3=\{re^{-i\tilde{\varphi}}\ |\ r\geq\frac{1}{t^\alpha}\}, \tilde{\varphi}\in (\frac{\pi\alpha}{2},\pi).$ Then
there exists a constant $C>0$ such that
\begin{align}\label{7.28.3}
\bigg\|\frac{1}{2\pi i}\int_{\tilde{\Gamma}_k} E_{\alpha}(t^\alpha\lambda)(\lambda I+A)^{-1}u_0d\lambda\bigg\|_{L^s(\Omega)}
&\leq C\int_{\frac{1}{t^\alpha}}^{+\infty}\frac{\|u_0\|_{L^s(\Omega)}}{r(1+t^\alpha r)}dr
&\leq C\|u_0\|_{L^s(\Omega)},\ k=1,3,
\end{align}
and
\begin{align}\label{7.28.4}
\bigg\|\frac{1}{2\pi i}\int_{\tilde{\Gamma}_2} E_{\alpha}(t^\alpha \lambda)(\lambda I+A)^{-1}u_0d\lambda\bigg\|_{L^s(\Omega)}
\leq C \int_{-\tilde{\varphi}}^{\tilde{\varphi}}\frac{1}{|\lambda|}|d\lambda|\|u_0\|_{L^s(\Omega)}\leq C\|u_0\|_{L^s(\Omega)}.
\end{align}
Combining \eqref{7.28.1}-\eqref{7.28.4}, we get that $\|P_\alpha(t)u_0\|_{L^s(\Omega)}\leq \frac{C}{1+t^\alpha}\|u_0\|_{L^s(\Omega)}$ for some $C>0$.

Next, we prove the second estimate in \eqref{100.7}. By \eqref{100.1} and $4> \frac{3\alpha}{2}$, there exists a constant $C>0$ such that $E_{\alpha,4}(-t^\alpha)\geq \frac{C}{1+t^\alpha}$. Thus
\[
\|{}_0I_t^1P_\alpha(t)u_0\|_{L^s(\Omega)}\leq C{}_0I_t^1 E_{\alpha,4}(-t^\alpha)\|u_0\|_{L^s(\Omega)}
=CtE_{\alpha,5}(-t^\alpha)\|u_0\|_{L^s(\Omega)}\leq\frac{C}{1+t^{\alpha-1}}\|u_0\|_{L^s(\Omega)}.
\]

Using the dominated convergence theorem, we can find that $P_\alpha(t)u_0\in C((0,+\infty), L^\infty(\Omega))$. Moreover, a density argument and \eqref{100.7} show that $\lim_{t\rightarrow 0^+}P_\alpha(t)u_0=u_0$ in  $L^q(\Omega)$. Thus $u\in C([0,+\infty), L^q(\Omega))$.

Finally, we prove $\lim_{t\rightarrow 0^+}P_\alpha(t)u_0=u_0$ in the weak-star topology of $L^\infty(\Omega)$. In fact, by the definition of the operator $P_\alpha(t)$ and taking $\varepsilon>\lambda_1$, we have
\begin{align*}
P_\alpha(t)u_0-u_0=\frac{1}{2\pi i}\int_\Gamma E_{\alpha}(t^\alpha \lambda)(\lambda I+A)^{-1}u_0d\lambda-u_0
=-\frac{1}{2\pi i}\int_\Gamma E_{\alpha}(t^\alpha\lambda)\frac{1}{\lambda}A(\lambda I+A)^{-1}u_0d\lambda.
\end{align*}
This and the dominated convergence theorem  imply that for every $v\in C_0^\infty(\Omega)$,
\begin{align*}
\int_\Omega [P_\alpha(t)u_0-u_0]vdx
=-\int_\Omega\frac{1}{2\pi i}\int_\Gamma E_{\alpha}(t^\alpha\lambda)\frac{1}{\lambda}(\lambda I+A)^{-1}u_0d\lambda Avdx\rightarrow 0,
\end{align*}
as $t\rightarrow 0^+$. Thus we can complete the proof by \eqref{100.7} and the fact that $C_0^\infty(\Omega)$ is dense
in $L^1(\Omega)$.

(ii) The proof is similar to that of (i), so we omit it.

(iii) Using the dominated convergence theorem, we find
\begin{align}\label{7.28.6}
P_\alpha'(t)u_0=\frac{t^{\alpha-1}}{2\pi i}\int_\Gamma \lambda E_{\alpha,\alpha}(t^\alpha \lambda)(\lambda I+A)^{-1}u_0 d\lambda=-\frac{t^{\alpha-1}}{2\pi i}\int_\Gamma E_{\alpha,\alpha}(t^\alpha \lambda)A(\lambda I+A)^{-1}u_0 d\lambda.
\end{align}
Then $P_\alpha(t)u_0\in C^1((0,+\infty),L^\infty(\Omega))$ by the dominated convergence theorem. Furthermore, for $v\in D(A)$, we deduce from \eqref{7.28.6} that
\begin{align*}
\int_\Omega [P_\alpha'(t)u_0]vdx=-\frac{t^{\alpha-1}}{2\pi i}\int_\Omega\int_\Gamma E_{\alpha,\alpha}(t^\alpha \lambda)(\lambda I+A)^{-1}u_0 d\lambda Avdx\rightarrow 0,
\end{align*}
as $t\rightarrow 0^+$. Note that $\int_\Omega [P_\alpha(t)u_0]vdx\in C([0,+\infty))$ by (i). Consequently, $\int_\Omega (P_\alpha(t)u_0)vdx\in C^1([0,+\infty))$.

(iv)The proof is similar to that of Lemma 3.5 in \cite{Zhangli4}, so we omit it.
\end{proof}

According to the results in \cite{Zhangli4}, we can give the definition of the mild solution of \eqref{20.1}
\begin{definition}
Let $T>0$ and $u_0,u_1\in L^\infty(\Omega)$. A function  $u\in C((0,T],L^\infty(\Omega))$ is said to be a mild solution of problem
\eqref{20.1} if $u$ satisfies
 $\lim_{t\rightarrow 0^+}\|u(t)-P_\alpha(t)u_0\|_{L^\infty(\Omega)}=0$ and
\[
u=P_\alpha(t)u_0+{}_0I_t^1P_\alpha(t)u_1+
\int_0^t(t-s)^{\alpha-1}S_\alpha(t-s){_0I_s^{\gamma}}(|u|^{p})ds.
\]
\end{definition}

For problem \eqref{20.1}, we have the following local existence result.
\begin{theorem}\label{theorem1}
Let $1<\alpha<2$, $\gamma> 0$ and $p>1$. For given $u_0,u_1\in L^\infty(\Omega)$, there exists $T=T(u_0,u_1)>0$ such that problem \eqref{20.1} admits a unique mild solution $u$ in
$C((0,T],L^\infty(\Omega))\cap L^\infty((0,T),L^\infty(\Omega))\cap C([0,T],L^q(\Omega))$ for every $q\in (1,\infty)$. The solution $u$ can be extended to a maximal interval $[0,T^{*})$, and either $T^{*}=+\infty$ or
$T^{*}<+\infty$ and
$\limsup_{t\rightarrow T^{*-}}\|u(t)\|_{L^\infty(\Omega)}=+\infty$.
\end{theorem}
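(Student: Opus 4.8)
The plan is to construct the mild solution by a contraction-mapping argument applied to the Duhamel formulation, and then to upgrade the integrability and to establish the blow-up alternative by a continuation argument. First I would fix the complete metric space on which to run the fixed point. Set
\[
X_T=\{u\in L^\infty((0,T),L^\infty(\Omega))\ :\ u\in C((0,T],L^\infty(\Omega))\},\qquad \|u\|_{X_T}=\sup_{0<t\le T}\|u(t)\|_{L^\infty(\Omega)},
\]
which is a Banach space, and define
\[
\Phi(u)(t)=P_\alpha(t)u_0+{}_0I_t^1P_\alpha(t)u_1+\int_0^t(t-s)^{\alpha-1}S_\alpha(t-s)\,{}_0I_s^{\gamma}(|u|^p)\,ds.
\]
By Lemma \ref{lemma100}(i),(ii) the two linear terms lie in $C((0,T],L^\infty(\Omega))$ (the second being continuous up to $t=0$ with value $0$) and are bounded in $X_T$ by $C(\|u_0\|_{L^\infty(\Omega)}+\|u_1\|_{L^\infty(\Omega)})$. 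A fixed point of $\Phi$ in $X_T$ is then precisely a mild solution.

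The heart of the matter is the control of the memory term. Using the definition of ${}_0I_s^\gamma$ I would first bound $\|{}_0I_s^{\gamma}(|u|^p)\|_{L^\infty(\Omega)}\le \frac{s^{\gamma}}{\Gamma(\gamma+1)}\|u\|_{X_T}^p$. Combining this with the estimate $\|S_\alpha(\tau)f\|_{L^\infty(\Omega)}\le C\|f\|_{L^\infty(\Omega)}$ from Lemma \ref{lemma100}(ii), the weight $(t-s)^{\alpha-1}$, and the Beta integral $\int_0^t(t-s)^{\alpha-1}s^{\gamma}\,ds=B(\alpha,\gamma+1)\,t^{\alpha+\gamma}$, yields
\[
\Big\|\int_0^t(t-s)^{\alpha-1}S_\alpha(t-s)\,{}_0I_s^{\gamma}(|u|^p)\,ds\Big\|_{L^\infty(\Omega)}\le C\,t^{\alpha+\gamma}\|u\|_{X_T}^p.
\]
Choosing $M=2C(\|u_0\|_{L^\infty(\Omega)}+\|u_1\|_{L^\infty(\Omega)})$ and $T$ small enough that $CT^{\alpha+\gamma}M^p\le M/2$, the closed ball $B_M\subset X_T$ is mapped into itself. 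For the contraction, the elementary inequality $\big||u|^p-|v|^p\big|\le p(|u|^{p-1}+|v|^{p-1})|u-v|$ gives, for $u,v\in B_M$, a Lipschitz bound $\|\Phi(u)-\Phi(v)\|_{X_T}\le Cp(2M)^{p-1}T^{\alpha+\gamma}\|u-v\|_{X_T}$, a strict contraction after a further shrinking of $T$. The Banach fixed point theorem produces a unique mild solution in $B_M$; uniqueness in all of $X_T$ follows by comparing two solutions on a short initial interval (where both lie in a common ball) and iterating.

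Next I would verify the remaining regularity and the initial trace. The inclusion $u\in C([0,T],L^q(\Omega))$ for every $q\in(1,\infty)$ holds because $P_\alpha(t)u_0$ and ${}_0I_t^1P_\alpha(t)u_1$ lie in $C([0,\infty),L^q(\Omega))$ by Lemma \ref{lemma100}(i),(ii), while the Duhamel term lies in $C([0,T],L^q(\Omega))$ by Lemma \ref{lemma100}(iv) applied with $f={}_0I_s^{\gamma}(|u|^p)\in L^\infty((0,T),L^\infty(\Omega))\subset L^q((0,T),L^r(\Omega))$. The required trace $\lim_{t\to 0^+}\|u(t)-P_\alpha(t)u_0\|_{L^\infty(\Omega)}=0$ then comes for free: from the fixed-point identity, $u(t)-P_\alpha(t)u_0$ equals ${}_0I_t^1P_\alpha(t)u_1$ plus the Duhamel term, and these tend to zero in $L^\infty(\Omega)$ as $t\to 0^+$ at rates $O(t)$ and $O(t^{\alpha+\gamma})$ respectively.

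Finally I would set $T^*=\sup\{T>0\ :\ \text{a mild solution exists on }(0,T]\}$ and prove the dichotomy. The delicate point, and the one I expect to be the main obstacle, is the nonlocal memory: unlike the Markovian heat case, restarting the equation at a time $t_0\in(0,T^*)$ is not simply re-solving the same problem with new data, since ${}_0I_s^{\gamma}(|u|^p)$ remembers the whole history on $[0,t_0]$. To handle this I would, for $s\ge t_0$, split ${}_0I_s^{\gamma}(|u|^p)=\frac{1}{\Gamma(\gamma)}\int_0^{t_0}(s-\sigma)^{\gamma-1}|u(\sigma)|^p\,d\sigma+\frac{1}{\Gamma(\gamma)}\int_{t_0}^{s}(s-\sigma)^{\gamma-1}|u(\sigma)|^p\,d\sigma$, treat the first (history) integral as a known, $L^\infty(\Omega)$-bounded forcing term, and run the same contraction for the unknown part on $[t_0,t_0+\tau]$. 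The key observation is that $\tau$ may be chosen depending only on a uniform bound for $\|u\|_{L^\infty(\Omega)}$ on $[0,t_0]$. Consequently, if $T^*<+\infty$ and $\limsup_{t\to T^{*-}}\|u(t)\|_{L^\infty(\Omega)}<+\infty$, then $\|u\|_{L^\infty(\Omega)}$ is bounded on $[0,T^*)$, so the solution extends by a fixed step past $T^*$, contradicting maximality. This forces the stated alternative.
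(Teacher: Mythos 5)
Your proposal is correct, and its core local-existence step is essentially the paper's own argument: the paper runs the same contraction on the closed ball $B_K$ of $E_T=L^\infty((0,T),L^\infty(\Omega))$, with the same ingredients you use (Lemma \ref{lemma100} for the two linear terms, the bound $\|{}_0I_s^{\gamma}(|u|^p)\|_{L^\infty(\Omega)}\leq \frac{s^\gamma}{\Gamma(\gamma+1)}\|u\|^p$ together with the Beta integral for the memory term, and Lemma \ref{lemma100}(iv) for $C([0,T],L^q(\Omega))$); the only difference there is cosmetic, namely that you build continuity on $(0,T]$ into your space $X_T$, whereas the paper works in $E_T$ and recovers continuity afterwards --- note that with your setup you still owe the (routine, dominated-convergence) check that the Duhamel term is continuous in $L^\infty(\Omega)$, which the paper states explicitly, since otherwise $\Phi$ is not known to map $X_T$ into itself. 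Where you genuinely diverge is the blow-up alternative: the paper defines $T^*$ and argues that if $T^*<+\infty$ and $u$ is bounded, then $\lim_{t\rightarrow T^{*-}}u(t)$ exists in $L^\infty(\Omega)$, deferring this to ``an analogous argument to Theorem 4.5 in \cite{Zhangli4}'', and then asserts that $u$ can be extended past $T^*$. Your route bypasses the existence of the limit: you split ${}_0I_s^{\gamma}(|u|^p)$ into a known history forcing on $[0,t_0]$ plus an unknown tail and contract only in the tail, with a step $\tau$ depending only on the uniform bound and on $T^*$. This is self-contained and explicitly confronts the two sources of nonlocality (the memory term and the fact that $P_\alpha$, $S_\alpha$ are not a semigroup) that the paper handles only by citation; what it costs is a little extra bookkeeping you should spell out, namely that the extension produced on $[t_0,t_0+\tau]$ coincides with $u$ on the overlap $(t_0,T^*)$ (by the same contraction uniqueness) so that the glued function is a genuine mild solution on $(0,t_0+\tau]$, contradicting the definition of $T^*$.
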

\begin{proof}
For given $T>0$, let
$
E_{T}=L^\infty((0,T),L^\infty(\Omega)).
$
Let $B_K$ denote the closed ball in $E_{T}$ with center $0$ and radius $K$.
For given $u_0,u_1\in L^\infty(\Omega)$, we define the operator $G$ on $E_{T}$ as
\begin{equation}\label{7.27.1}
G(u)(t)=P_\alpha(t)u_0+{}_0I_t^1P_\alpha(t)u_1+
\int_0^t(t-s)^{\alpha-1}S_\alpha(t-s){_0I_s^{\gamma}}(|u|^{p})ds.
\end{equation}
Choose $M\geq \|u_0\|_{L^{\infty}(\Omega)}+T\|u_1\|_{L^{\infty}(\Omega)}$. Then
it follows from Lemma \ref{lemma100} that for $u\in B_K$, $t\in(0,T)$ and some constant $C>0$,
\begin{align}\label{7.27.2}
\|G(u)(t)\|_{L^{\infty}(\Omega)}
&\leq C [\|u_0\|_{L^\infty(\Omega)}+T\|u_1\|_{L^\infty(\Omega)}]
+C\int_0^t(t-\tau)^{\alpha-1}\tau^{\gamma}
\|u(\tau)\|_{L^{\infty}(\Omega)}^pd\tau\nonumber\\
&\leq C(M+T^{\alpha+\gamma}K^p),
\end{align}
\begin{equation}\label{7.27.3}
\|G(u)-G(v)\|_{L^{\infty}(\Omega)}\leq C K^{p-1}T^{\alpha+\gamma}\|u(t)-v(t)\|_{L^\infty((0,T),L^\infty(\Omega))}.
\end{equation}
From \eqref{7.27.2} and \eqref{7.27.3}, we can choose $T$ and $M$ such that $G$ is a strict contractive mapping on $B_K$. Thus $G$ possesses a unique fixed point $u \in B_K$. Note that $\int_0^t(t-s)^{\alpha-1}S_\alpha(t-s){_0I_s^{\gamma}}(|u|^{p})ds\in C([0,T],L^{\infty}(\Omega))$ by the dominated convergence theorem. Furthermore, using Lemma \ref{lemma100},  we know that $u\in C((0,T],L^{\infty}(\Omega))\cap C([0,T],L^{q}(\Omega))$ for every $q\in (1,+\infty)$, and $\lim_{t\rightarrow 0^+}\|u(t)-P_\alpha(t)u_0\|_{L^\infty(\Omega)}=0$. The uniqueness of the mild solution follows from Gronwall's inequality.

Set
\[
T^*=\sup\{T\ |\ u\in L^\infty((0,T),L^{\infty}(\Omega))\cap C([0,T],L^{q}(\Omega)) \text{ is a mild solution of \eqref{20.1}}\}.
\]
By an analogous argument to that of Theorem 4.5 in \cite{Zhangli4},  we can prove that if $T^* <+\infty$ and $\|u\|_{L^\infty((0,T^*),L^\infty(\Omega))}<+\infty$, then $\lim_{t\rightarrow T^{*-}}u(t)$ exists in $L^{\infty}(\Omega)$. This implies that $u$ can be extended after $T^*$, which contradicts the definition of $T^*$. Thus we obtain the desired conclusion.
\end{proof}

Recalling the formula of integration by parts, we can give the following definition of weak solution of \eqref{20.1}. Moreover, we also obtain the relationship between weak solutions and mild solutions of \eqref{20.1}.
\begin{definition}\label{def1}
Let $1<\alpha\leq 2$, $u_0,u_1\in L^1(\Omega)$ and $T>0$. We say that $u\in L^p((0,T),L^p(\Omega))$ is a weak solution of \eqref{20.1} if
\[
\int_{\Omega}\int_0^T[{_0I_t^{\gamma}}(|u|^{p})\varphi+(u_0+tu_1) ({_tD_T^\alpha}\varphi)] dtdx
=\int_{\Omega}\int_0^Tu(-\triangle\varphi)dtdx
+\int_{\Omega}\int_0^Tu({_tD_T^\alpha}\varphi) dtdx
\]
for every $\varphi\in C^{2,2}([0,T]\times \bar{\Omega})$ with $\varphi=0$ on $ \partial\Omega$ and $\varphi(T,x)=\varphi_t(T,x)=0$ for $x\in \bar{\Omega}$. Moreover, we call $u$ a global weak solution of \eqref{20.1} if $T > 0$ can be arbitrarily
chosen.
\end{definition}

\begin{lemma}\label{lemma6.28}
Let $T>0$, $u_0,u_1\in L^\infty(\Omega)$. If $u\in C((0,T],L^{\infty}(\Omega))$ is a mild solution of \eqref{20.1} obtained by Theorem \ref{theorem1}, then $u$ is also a weak solution of \eqref{20.1}.
\end{lemma}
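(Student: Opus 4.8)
The plan is to reduce the claim to the scalar integration-by-parts identity of Lemma \ref{l2} by projecting onto the Dirichlet eigenbasis of $-\triangle$. Let $\{\varphi_k\}_{k\geq1}$ be an orthonormal basis of $L^2(\Omega)$ consisting of Dirichlet eigenfunctions of $-\triangle$ with eigenvalues $0<\lambda_1\leq\lambda_2\leq\cdots$, and for $v\in L^2(\Omega)$ write $v_k=\int_\Omega v\varphi_k\,dx$. First I would project the mild-solution formula onto $\varphi_k$. Since $(\lambda I+A)^{-1}\varphi_k=(\lambda+\lambda_k)^{-1}\varphi_k$ and the integrand decays like $|\lambda|^{-2}$ on $\Gamma$ by \eqref{100.1}, Cauchy's residue theorem (the residue at the enclosed pole $\lambda=-\lambda_k$) gives $\langle P_\alpha(t)u_0,\varphi_k\rangle=E_\alpha(-\lambda_k t^\alpha)u_{0,k}$ and, via \eqref{7.25.1}, $\langle{}_0I_t^1P_\alpha(t)u_1,\varphi_k\rangle=tE_{\alpha,2}(-\lambda_k t^\alpha)u_{1,k}$; similarly $\langle S_\alpha(t-s)g,\varphi_k\rangle=E_{\alpha,\alpha}(-\lambda_k(t-s)^\alpha)g_k$, so Fubini's theorem turns the Duhamel term into $\int_0^t(t-s)^{\alpha-1}E_{\alpha,\alpha}(-\lambda_k(t-s)^\alpha)\,{}_0I_s^\gamma F_k\,ds$ with $F_k(t)=\langle|u(t)|^p,\varphi_k\rangle$. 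Comparing with \eqref{8.11.1}, the coefficient $u_k(t)=\langle u(t),\varphi_k\rangle$ is exactly the scalar solution of
\[
{}_0D_t^\alpha u_k+\lambda_k u_k={}_0I_t^\gamma F_k,\qquad u_k(0)=u_{0,k},\ u_k'(0)=u_{1,k}.
\]

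Next I would record the regularity of $u_k$ needed to invoke Lemma \ref{l2}. The first two terms in the representation of $u_k$ are smooth in $t$; for the Duhamel term, since $u\in L^\infty((0,T),L^\infty(\Omega))\cap C([0,T],L^q(\Omega))$ we have $F_k\in C([0,T])$, and Lemma \ref{lemma100}(iv) shows the Duhamel term lies in $C^1([0,T])$. Hence $u_k\in C^1([0,T])$ and ${}_0D_t^\alpha u_k=-\lambda_k u_k+{}_0I_t^\gamma F_k\in C([0,T])\subset L^1(0,T)$. For a fixed test function $\varphi\in C^{2,2}([0,T]\times\bar\Omega)$ with $\varphi=0$ on $\partial\Omega$ and $\varphi(T,\cdot)=\varphi_t(T,\cdot)=0$, set $\phi_k(t)=\int_\Omega\varphi(t,x)\varphi_k(x)\,dx\in C^2([0,T])$, which satisfies $\phi_k(T)=\phi_k'(T)=0$. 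Applying Lemma \ref{l2} with $f=u_k$, $g=\phi_k$ and inserting the scalar equation yields, for each $k$,
\[
\int_0^T(u_k-u_{1,k}t-u_{0,k})\,{}_tD_T^\alpha\phi_k\,dt=-\lambda_k\int_0^T u_k\phi_k\,dt+\int_0^T({}_0I_t^\gamma F_k)\phi_k\,dt.
\]

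Finally I would sum over $k$ and reassemble the spatial integrals by Parseval's identity. Because ${}_tD_T^\alpha$ acts only in $t$ and $\varphi\in C^{2,2}$, writing ${}_tD_T^\alpha\phi_k={}_tI_T^{2-\alpha}\phi_k''$ via Lemma \ref{l1}(iv) gives ${}_tD_T^\alpha\phi_k=\langle{}_tD_T^\alpha\varphi,\varphi_k\rangle$, so that $\sum_k u_k\,{}_tD_T^\alpha\phi_k=\int_\Omega u\,{}_tD_T^\alpha\varphi\,dx$; likewise $\sum_k\lambda_k u_k\phi_k=\int_\Omega u(-\triangle\varphi)\,dx$ (since $\lambda_k\phi_k=\langle-\triangle\varphi,\varphi_k\rangle$), $\sum_k({}_0I_t^\gamma F_k)\phi_k=\int_\Omega({}_0I_t^\gamma|u|^p)\varphi\,dx$, and analogously for the $u_{0,k},u_{1,k}$ terms. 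Integrating in $t$ and rearranging, these recombine into exactly the identity of Definition \ref{def1}. I expect the main obstacle to be the rigorous justification of interchanging the infinite sum with the time integral and of the termwise Parseval identities: the decay $\phi_k=O(\lambda_k^{-1})$ coming from $(\lambda_k\phi_k)=(\langle-\triangle\varphi,\varphi_k\rangle)\in\ell^2$, combined with $(u_k(t))\in\ell^2$ bounded uniformly in $t$ (as $u\in C([0,T],L^2(\Omega))$), yields absolute convergence by Cauchy--Schwarz, and dominated convergence then legitimizes the interchange; one must also check that the merely weak-$\ast$ continuity of $u$ at $t=0$ causes no difficulty, which is guaranteed by $u\in C([0,T],L^q(\Omega))$ for $q\in(1,\infty)$.
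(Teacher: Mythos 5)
Your proof is correct, but it takes a genuinely different route from the paper's. The paper disposes of this lemma in one line: having noted $u\in C([0,T],L^q(\Omega))$ from Theorem \ref{theorem1}, it cites ``an argument similar to the proof of Lemma 5.2 in \cite{Zhangli4}'' — a whole-space result, whose proof must work directly with the Duhamel formula and the solution operators (Fubini-type manipulations and the integration-by-parts formula), since on $\mathbb{R}^N$ no Dirichlet eigenbasis is available. You instead exploit the bounded-domain structure: diagonalize $P_\alpha$, $S_\alpha$ on the eigenbasis $\{\varphi_k\}$ via the residue computation $\frac{1}{2\pi i}\int_\Gamma E_\alpha(\lambda t^\alpha)(\lambda+\lambda_k)^{-1}d\lambda=E_\alpha(-\lambda_k t^\alpha)$ (legitimate thanks to the $O(|\lambda|^{-2})$ decay of the integrand on $\Gamma$ furnished by \eqref{100.1}, provided one fixes the counterclockwise orientation of $\Gamma$ around the spectrum), reduce to the scalar fractional ODE ${}_0D_t^\alpha u_k+\lambda_k u_k={}_0I_t^\gamma F_k$, apply the paper's own Lemma \ref{l2} mode by mode, and resum with Parseval. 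This yields a proof that is entirely self-contained within the paper (only Lemmas \ref{l1}, \ref{l2}, \ref{lemma100} plus standard spectral theory), at the cost of being tied to the bounded domain and of requiring the summability bookkeeping you correctly identify (Cauchy--Schwarz with $(\lambda_k\phi_k)_k\in\ell^2$, then Fubini--Tonelli); the paper's citation route is shorter and covers settings without an eigenbasis, but leaves the verification outside the paper. Two fine points you should make explicit rather than leave implicit: first, the step from the projected Duhamel formula to the scalar ODE (with $u_k(0)=u_{0,k}$, $u_k'(0)=u_{1,k}$) is exactly the standard equivalence the paper itself invokes at \eqref{8.11.1}, so cite it there; second, your use of Lemma \ref{lemma100}(iv) to get $u_k\in C^1([0,T])$ needs ${}_0I_s^\gamma(|u|^p)\in L^q((0,T),L^r(\Omega))$ with $q(\alpha-1)>1$, which indeed holds since $u\in L^\infty((0,T),L^\infty(\Omega))$, but say so. Neither point is a gap.
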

\begin{proof}
By Theorem \ref{theorem1}, we know $u\in C([0,T],L^{q}(\Omega))$ for every $q\in (1,+\infty)$. Thus an argument similar to the proof of Lemma 5.2 in \cite{Zhangli4} shows that $u$ is also a weak solution of \eqref{20.1}.
\end{proof}

Then we give blow-up results of problem \eqref{20.1} with $1<\alpha<2$.
\begin{theorem}\label{th1}
Let $p>1$, $\gamma>0$, $1<\alpha<2$ and $u_0,u_1\in L^\infty(\Omega)$. If one of the following conditions is
satisfied:
\begin{enumerate}
\item[\rm(a)] $\alpha+\gamma> 2$ and $p(1-\gamma)\leq 1$;
\item[\rm(b)] $\alpha+\gamma\leq 2$, $\int_{\Omega}u_1(x)\varphi_1(x)dx=0$ and $p(1-\gamma)\leq 1$;
\item[\rm(c)] $\alpha+\gamma=2$, $\int_{\Omega}u_1(x)\varphi_1(x)dx>0$ and $p(1-\gamma)\leq 1$;
\item[\rm(d)] $\alpha+\gamma<2$, $\int_{\Omega}u_1(x)\varphi_1(x)dx>0$ and $p<1+\frac{\gamma}{\alpha-1}$,
\end{enumerate}
then all nonzero mild solutions of \eqref{20.1} does not exist globally in time.
\end{theorem}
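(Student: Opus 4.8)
The plan is to reduce the PDE to a scalar fractional ordinary differential inequality for the projection of $u$ onto the first Dirichlet eigenfunction, and then to invoke Lemma \ref{lemma17}(viii), whose four hypotheses are arranged to coincide exactly with cases (a)--(d) of the theorem. Let $u$ be a nonzero mild solution on $[0,T^{*})$ and suppose, for contradiction, that $T^{*}=+\infty$. I set
\[
w(t)=\int_\Omega u(t,x)\varphi_1(x)\,dx,\quad w_0=\int_\Omega u_0\varphi_1\,dx,\quad w_1=\int_\Omega u_1\varphi_1\,dx,\quad f(t)=\int_\Omega |u(t,x)|^p\varphi_1(x)\,dx.
\]
Since $u\in C([0,T],L^q(\Omega))$ for every $q\in(1,\infty)$ by Theorem \ref{theorem1}, these are well defined and $f$ is continuous. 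Because $\varphi_1\geq 0$ with $\int_\Omega\varphi_1\,dx=1$ and $s\mapsto|s|^p$ is convex, Jensen's inequality gives $f(t)\geq|w(t)|^p$ for every $t$.

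To obtain the equation satisfied by $w$, I would pair the mild-solution formula with $\varphi_1$. Using self-adjointness of the resolvent and $A\varphi_1=\lambda_1\varphi_1$, one has $\int_\Omega[(\lambda I+A)^{-1}g]\varphi_1\,dx=(\lambda+\lambda_1)^{-1}\int_\Omega g\varphi_1\,dx$, so the operator-valued contour integrals defining $P_\alpha$ and $S_\alpha$ collapse, via the residue at $\lambda=-\lambda_1$, to the scalar Mittag-Leffler kernels. This yields precisely the representation \eqref{8.11.1} with $a=\lambda_1$,
\[
w(t)=E_\alpha(-\lambda_1 t^\alpha)w_0+tE_{\alpha,2}(-\lambda_1 t^\alpha)w_1+\int_0^t(t-s)^{\alpha-1}E_{\alpha,\alpha}(-\lambda_1(t-s)^\alpha)\,{}_0I_s^\gamma f\,ds,
\]
equivalently $w$ solves ${}_0D_t^\alpha w+\lambda_1 w={}_0I_t^\gamma f$ with $w(0)=w_0$ and $w'(0)=w_1$. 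Alternatively, the same identity follows from Lemma \ref{lemma6.28} by inserting the test function $\varphi(t,x)=\phi(t)\varphi_1(x)$ with $\phi(T)=\phi'(T)=0$ into Definition \ref{def1}, which produces the weak inequality \eqref{6.10-2} directly. Smoothness of the Mittag-Leffler functions together with the continuity of $f$ then shows $w\in C^1([0,T])$ and ${}_0I_t^{2-\alpha}(w-w_1 t-w_0)\in AC^2([0,T])$, so $w$ lies within the hypotheses of Lemma \ref{lemma17}.

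It remains to check $w\not\equiv0$ and to match the cases. If $w\equiv0$, the equation forces ${}_0I_t^\gamma f\equiv0$, whence $f\equiv0$ by injectivity of the fractional integral; but $\varphi_1>0$ in $\Omega$ and $f=\int_\Omega|u|^p\varphi_1\,dx$ would then force $u\equiv0$, contradicting the choice of $u$. Thus $w\not\equiv0$, and, together with $f\geq|w|^p$, the pair $(w,f)$ satisfies \eqref{1.7-1} with $a=\lambda_1$, $b=1$. Reading off $w'(0)=w_1=\int_\Omega u_1\varphi_1\,dx$ from the representation, the sign conditions in (b), (c), (d) translate verbatim into $w'(0)=0$, $w'(0)>0$, $w'(0)>0$, so hypotheses (a)--(d) of Theorem \ref{th1} are exactly hypotheses (a)--(d) of Lemma \ref{lemma17}(viii). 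That lemma yields $T^{*}<+\infty$, a contradiction; hence no nonzero mild solution is global.

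The main obstacle is the rigorous passage from the PDE to the scalar equation for $w$: one must justify commuting the spatial pairing against $\varphi_1$ with the nonlocal Caputo derivative and with the operator-valued contour integrals, and must verify the precise temporal regularity (the $C^1$ property and the $AC^2$ condition on ${}_0I_t^{2-\alpha}(w-w_1t-w_0)$) required by Lemma \ref{lemma17}, in particular the correct reading off of $w'(0)=w_1$ from the Mittag-Leffler representation. Once the representation \eqref{8.11.1} is secured, the rest is bookkeeping of the case matching, since the delicate analysis---especially the borderline $p(1-\gamma)=1$ subcases, which rely on the sharp lower bounds for ${}_0I_t^\beta w$---has already been carried out within Lemma \ref{lemma17}.
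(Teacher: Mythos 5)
Your proposal is correct and follows the paper's own strategy: project onto $\varphi_1$, apply Jensen's inequality, and feed the resulting scalar fractional differential inequality into the ODE blow-up dichotomy whose cases (a)--(d) were arranged to match the theorem's. Two implementation details differ. First, the paper derives the scalar equation through the weak formulation (Lemma \ref{lemma6.28} with test functions $\psi_T(t)\varphi_1(x)$, followed by a distributional-to-classical regularity step), whereas your primary route collapses the contour integrals defining $P_\alpha$ and $S_\alpha$ by self-adjointness of the resolvent and the residue at $\lambda=-\lambda_1$; both are legitimate, and you in fact offer the paper's route as an alternative. Second, the paper invokes Corollary \ref{coro1}(vi), which is tailored to the inequality ${}_0D_t^\alpha w+\lambda_1 w\geq {}_0I_t^\gamma|w|^p$, while you keep the equality ${}_0D_t^\alpha w+\lambda_1 w={}_0I_t^\gamma f$ with $f(t)=\int_\Omega|u|^p\varphi_1\,dx\geq|w|^p$ and invoke Lemma \ref{lemma17}(viii); the two statements have identical case structure (a)--(d) and either applies here, so this is a distinction without a gap. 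Finally, your explicit check that $u\not\equiv0$ forces $w\not\equiv0$ (via injectivity of ${}_0I_t^\gamma$ and strict positivity of $\varphi_1$) addresses a hypothesis that both lemmas require and that the paper's proof leaves implicit; this is a small but genuine improvement in completeness.
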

\begin{proof}
By the regularity theory of elliptic equations\cite{Gilbarg}, we know the eigenfunction $\varphi_1\in C^2(\bar{\Omega})$ and $\varphi_1(x)=0$ on $\Omega$.
Suppose that $u$ is a global mild solution of \eqref{20.1}. Then $u\in C([0,+\infty),L^{q}(\Omega))$ for every $q\in (1,+\infty)$, and $u$ is also a global weak solution of \eqref{20.1} by Lemma \ref{lemma6.28}. For  $T>0$, we take $\varphi(t,x)=\psi_T(t)\varphi_1(x)$ as a test function, where
$\psi_T\in C^2([0,T])$ satisfies $\psi_T(T)=\psi_T'(T)=0$, and then
\begin{align}\label{6.24.1}
\int_{\Omega}\int_0^T [{_0I_t^{\gamma}}(|u|^{p})\varphi_1\psi_T+
(u_0+tu_1)\varphi_1({_tD_T^\alpha}\psi_T)]dtdx
=\int_{\Omega}\int_0^T[\lambda_1u\varphi_1
\psi_T+u
\varphi_1({_tD_T^\alpha}\psi_T)]dtdx.
\end{align}
Denote $w(t)=\int_{\Omega}u\varphi_1 dx$.  From Lemma \ref{lemma100}, we deduce $w\in C^1([0,T])$.  Then \eqref{6.24.1} and \eqref{112-4} yield that
\begin{align}\label{6.24.2}
&\int_0^T{_0I_t^{2-\alpha}}[w-w(0)-tw'(0)]{\psi_T}'' dt+\lambda_1\int_0^Tw\psi_T dt\nonumber\\
&=\int_0^T[w-w(0)-tw'(0)]{_tD_T^{\alpha}}\psi_T dt+\lambda_1\int_0^Tf\psi_T dt\nonumber\\
&=\int_0^T\int_{\Omega} [{_0I_t^{\gamma}}(|u|^{p})\varphi_1dx\psi_Tdt.
\end{align}
Due to the arbitrariness of $\psi_T$, we obtain
\begin{equation}\label{6.28.1}
\frac{d^2}{dt^2}{_0I_t^{2-\alpha}}[w-w(0)-tw'(0)]+\lambda_1w(t)=\int_{\Omega} {_0I_t^{\gamma}}(|u|^{p})\varphi_1dx
\end{equation}
in the sense of distributions. In addition, the fact that $u\in C([0,+\infty),L^{q}(\Omega))$ for every $q\in (1,+\infty)$ implies  $\int_{\Omega} {_0I_t^{\gamma}}(|u|^{p})\varphi_1dx \in C([0,T])$, and Lemma \ref{lemma100} yields ${_0I_t^{2-\alpha}}[w-w(0)-tw'(0)]\in C([0,T])$. Thus it follows from the regularity theory  that the equality \eqref{6.28.1} holds for $t\in [0,T]$ in the classical sense. In other words,
\[
{}_0D_t^\alpha w+\lambda_1w(t)=\int_{\Omega} {_0I_t^{\gamma}}(|u|^{p})\varphi_1dx,\ t\in [0,T].
\]
From Jensen's inequality, we have
\begin{equation}\label{6.28.2}
{}_0D_t^\alpha w+\lambda_1w(t)=\int_{\Omega} {_0I_t^{\gamma}}(|u|^{p})\varphi_1dx
\geq {_0I_t^{\gamma}}
\Big(\int_{\Omega}|u|\varphi_1dx\Big)^p
\geq {_0I_t^{\gamma}} |w|^p  ,\ \ t\in [0,T].
\end{equation}
Then \eqref{6.28.2} and Corollary \ref{coro1}(vi) yield a contradiction. This completes the
proof.
\end{proof}

Next, we give a blow-up result of the semilinear wave equation ( i.e. \eqref{20.1} with $\alpha=2$).
\begin{theorem}\label{th20}
Let $p>1$, $\gamma>0$, $\alpha=2$ and $u_0,u_1\in L^1(\Omega)$. Assume $u\in C([0,T],L^p(\Omega))$ is a weak solution of \eqref{20.1}.
If $p(1-\gamma)\leq 1$, then $T<+\infty$.
\end{theorem}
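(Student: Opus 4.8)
The plan is to reduce the partial differential inequality satisfied by $u$ to the ordinary differential inequality \eqref{6.16.2-2} for the spatial projection $w(t)=\int_\Omega u(t,x)\varphi_1(x)\,dx$ onto the first eigenfunction, and then invoke Corollary \ref{coro2}(iv). This mirrors the structure of the proof of Theorem \ref{th1}, but since $u$ is now only a weak solution with data $u_0,u_1\in L^1(\Omega)$, the reduction must be carried out entirely through the weak formulation of Definition \ref{def1} rather than through the mild-solution representation and Lemma \ref{lemma100}. We may assume $u\not\equiv0$, otherwise there is nothing to prove.

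First I would record that $w\in C([0,T])$: since $u\in C([0,T],L^p(\Omega))$ and $\varphi_1\in C^2(\bar\Omega)\subset L^{p'}(\Omega)$, both $t\mapsto w(t)$ and $t\mapsto W(t):=\int_\Omega|u(t,x)|^p\varphi_1(x)\,dx$ are continuous on $[0,T]$. Next, for $\alpha=2$ I would insert the separated test function $\varphi(t,x)=\Phi(t)\varphi_1(x)$, with $\Phi\in C^2([0,T])$ and $\Phi(T)=\Phi'(T)=0$, into the weak identity of Definition \ref{def1}. Using $-\triangle\varphi_1=\lambda_1\varphi_1$, the convention ${}_tD_T^2\varphi=\Phi''\varphi_1$, the normalization $\int_\Omega\varphi_1\,dx=1$, and Fubini's theorem to write $\int_\Omega{}_0I_t^\gamma(|u|^p)\varphi_1\,dx={}_0I_t^\gamma W=:g$, the identity collapses to the scalar relation
\[
\int_0^T (w-w_0-tw_1)\,\Phi''\,dt=\int_0^T\big(g-\lambda_1 w\big)\Phi\,dt,
\]
where $w_0=\int_\Omega u_0\varphi_1\,dx$ and $w_1=\int_\Omega u_1\varphi_1\,dx$.

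The key and most delicate step is the regularity bootstrap. Because $W$ is continuous, $g={}_0I_t^\gamma W$ is continuous on $[0,T]$; taking $\Phi\in C_c^\infty(0,T)$ in the identity above shows that the distributional second derivative of $w$ equals $g-\lambda_1 w\in C((0,T))$, so that $w\in C^2((0,T))\cap C([0,T])$ and $w''+\lambda_1 w=g$ in the classical sense. Testing again with general $\Phi$ and integrating by parts twice then recovers the initial conditions $w(0)=w_0$ and $w'(0)=w_1$ from the boundary terms at $t=0$. I expect this bootstrap — turning the low-regularity weak formulation into a genuine $C^2$ scalar equation with the correct Cauchy data — to be the main obstacle, and it is exactly the point where the mild-solution argument of Theorem \ref{th1} invoked Lemma \ref{lemma100} instead. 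Finally, Jensen's inequality applied to the probability measure $\varphi_1\,dx$ gives $|w(t)|^p\le W(t)$, whence $g={}_0I_t^\gamma W\ge {}_0I_t^\gamma|w|^p$ and
\[
w''+\lambda_1 w\ge {}_0I_t^\gamma|w|^p,\qquad t\in[0,T].
\]

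It remains to verify $w\not\equiv0$: if $w\equiv0$ then $g\equiv0$, hence $W\equiv0$ by applying ${}_0D_t^\gamma$, which forces $u\equiv0$ and contradicts our assumption. Thus $w\not\equiv0$ satisfies \eqref{6.16.2-2} with $a=\lambda_1>0$ and $b=1$, so that $w$ meets all the hypotheses of Corollary \ref{coro2}; since $p(1-\gamma)\le1$, part (iv) forces the interval of existence to be finite, i.e. $T<+\infty$, which is the assertion.
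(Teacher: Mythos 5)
Your proposal is correct and takes essentially the same route as the paper's own proof: project onto the first eigenfunction $\varphi_1$, use separated test functions $\Phi(t)\varphi_1(x)$ in the weak formulation to obtain (via a regularity bootstrap and Jensen's inequality for the probability measure $\varphi_1\,dx$) the scalar inequality $w''+\lambda_1 w\geq {}_0I_t^{\gamma}|w|^p$ for $w(t)=\int_\Omega u\varphi_1\,dx$, and then invoke Corollary \ref{coro2}(iv). Your additional details --- recovering the Cauchy data $w(0)=w_0$, $w'(0)=w_1$ from the boundary terms, and verifying $w\not\equiv0$ by reducing it to $u\not\equiv0$ (a hypothesis the paper uses implicitly when applying Corollary \ref{coro2}) --- are sound refinements of the same argument rather than a different approach.
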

\begin{proof}
In this case, we  know that for every $\psi_T\in C^2([0,T])$ with $\psi_T(T)=\psi_T'(T)=0$,
\begin{align*}
\int_{\Omega}\int_0^T [{_0I_t^{\gamma}}(|u|^{p})\varphi_1\psi_T+
(u_0+tu_1)\varphi_1\psi_T'']dtdx
=\int_{\Omega}\int_0^T[\lambda_1u\varphi_1
\psi_T+u
\varphi_1\psi_T'']dtdx.
\end{align*}
Denote $w(t)=\int_{\Omega}u\varphi_1 dx$. Since $u\in C([0,T],L^p(\Omega))$, we have $w\in C([0,T])$. Then
\begin{equation*}
w''+\lambda_1w(t)=\int_{\Omega} {_0I_t^{\gamma}}(|u|^{p})\varphi_1dx
\end{equation*}
in the sense of distributions. Furthermore, since $w\in C([0,T])$ and $\int_{\Omega} {_0I_t^{\gamma}}(|u|^{p})\varphi_1dx \in C([0,T])$, we deduce from the regularity theory that $w\in C^2([0,T])$. Then in terms of Jensen's inequality, we have
\[
w''+\lambda_1w(t)
\geq {_0I_t^{\gamma}} |w|^p  ,\ \ t\in [0,T],
\]
which implies $T<+\infty$ by Corollary \ref{coro2}(iv).
\end{proof}
\begin{remark}\label{re3-10}
By  Corollary \ref{coro1} (i) and Corollary \ref{coro2}(i), we know that for given $T>0$, if
\[
T\int_{\Omega}u_1(x)\varphi_1(x)dx+\int_{\Omega}u_0(x)\varphi_1(x)dx
>K_1T^{\alpha+\gamma-\frac{p\gamma}{p-1}}
      +K_2T^{-\frac{\alpha+\gamma}{p-1}},
\]
then the corresponding solution of \eqref{20.1} does not exist globally in time and $T^*<T$.
\end{remark}

Finally, we have the following results of global existence of solutions for sufficiently small initial values.
\begin{theorem}\label{th6.30.1}
Let $p>1$, $1<\alpha<2$ and $u_0,u_1\in L^\infty(\Omega)$.
\begin{enumerate}
\item[\rm{(i)}] If $\alpha+\gamma\geq 2$, $p(1-\gamma)> 1$ and $\|u_0\|_{L^\infty(\Omega)}+\|u_1\|_{L^\infty(\Omega)}$ is sufficiently small, then the mild solution $u$ of \eqref{20.1} is global.
\item[\rm{(ii)}] If $\alpha+\gamma<2$, $p(1-\gamma)> 1$, $u_1\equiv0$ and $\|u_0\|_{L^\infty(\Omega)}$ is sufficiently small, then the mild solution of \eqref{20.1} exists globally.
\item[\rm{(iii)}] If $\alpha+\gamma<2$, $p\geq 1+\frac{\gamma}{\alpha-1}$ and $\|u_0\|_{L^\infty(\Omega)}+\|u_1\|_{L^\infty(\Omega)}$ is sufficiently small, then the mild solution of \eqref{20.1} exists globally.
\end{enumerate}
\end{theorem}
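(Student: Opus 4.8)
The plan is to produce the global solution by a contraction argument for the map $G$ of \eqref{7.27.1}, but carried out in a time-weighted space adapted to the expected algebraic decay rather than in $E_T$. For a decay exponent $\sigma>0$ to be chosen per case, I would work in
\[
X_\sigma=\Big\{u\in C((0,\infty),L^\infty(\Omega)):\ \|u\|_{X_\sigma}:=\sup_{t>0}(1+t)^\sigma\|u(t)\|_{L^\infty(\Omega)}<\infty\Big\},
\]
which is a Banach space whose norm near $t=0$ reduces to the ordinary sup-bound (since $\sigma>0$) and which enforces $\|u(t)\|_{L^\infty(\Omega)}\le\|u\|_{X_\sigma}(1+t)^{-\sigma}$ as $t\to\infty$. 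A preliminary observation is that each case yields $p(1-\gamma)>1$, hence $\gamma<1$: in (i) and (ii) this is assumed, while in (iii) the hypotheses $\alpha+\gamma<2$ and $p\ge1+\frac{\gamma}{\alpha-1}$ give $\alpha-1<1-\gamma$ and therefore $p\ge1+\frac{\gamma}{\alpha-1}>1+\frac{\gamma}{1-\gamma}=\frac{1}{1-\gamma}$. This ensures that the second estimate of Lemma \ref{lemma100}(ii), valid for $0<\gamma\le1$, is available throughout.

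For the linear part, Lemma \ref{lemma100}(i) gives $\|P_\alpha(t)u_0\|_{L^\infty(\Omega)}\le C(1+t)^{-\alpha}\|u_0\|_{L^\infty(\Omega)}$ and $\|{}_0I_t^1P_\alpha(t)u_1\|_{L^\infty(\Omega)}\le C(1+t)^{-(\alpha-1)}\|u_1\|_{L^\infty(\Omega)}$, so $P_\alpha(t)u_0+{}_0I_t^1P_\alpha(t)u_1\in X_\sigma$ with norm $\le C(\|u_0\|_{L^\infty(\Omega)}+\|u_1\|_{L^\infty(\Omega)})$ as soon as $\sigma\le\alpha-1$ (or merely $\sigma\le\alpha$ when $u_1\equiv0$). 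Writing $N(u)(t)$ for the last term of \eqref{7.27.1}, I would rewrite it, by a Fubini argument of the type leading to \eqref{6.16.8}, as
\[
N(u)(t)=\int_0^t{}_0I_r^\gamma\big[r^{\alpha-1}S_\alpha(r)(|u(\tau)|^p)\big]\big|_{r=t-\tau}\,d\tau,
\]
and then apply the bound $\|{}_0I_r^\gamma[r^{\alpha-1}S_\alpha(r)v]\|_{L^\infty(\Omega)}\le C(1+r^{1-\gamma})^{-1}\|v\|_{L^\infty(\Omega)}$ of Lemma \ref{lemma100}(ii) to get
\[
\|N(u)(t)\|_{L^\infty(\Omega)}\le C\int_0^t\frac{\|u(\tau)\|_{L^\infty(\Omega)}^p}{1+(t-\tau)^{1-\gamma}}\,d\tau\le C\|u\|_{X_\sigma}^p\int_0^t\frac{(1+\tau)^{-p\sigma}}{1+(t-\tau)^{1-\gamma}}\,d\tau.
\]
The elementary inequality $\big||u|^p-|v|^p\big|\le C(|u|^{p-1}+|v|^{p-1})|u-v|$ then gives the matching Lipschitz bound $\|N(u)-N(v)\|_{X_\sigma}\le C(\|u\|_{X_\sigma}^{p-1}+\|v\|_{X_\sigma}^{p-1})\|u-v\|_{X_\sigma}$.

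The decisive step, which I expect to be the main obstacle, is the convolution inequality
\[
\int_0^t\frac{(1+\tau)^{-p\sigma}}{1+(t-\tau)^{1-\gamma}}\,d\tau\le C(1+t)^{-\sigma}.
\]
I would prove it by splitting $[0,t]$ at $t/2$ and using $1-\gamma\in(0,1)$; the two pieces contribute the decay rates $(1+t)^{-(1-\gamma)}$ and $(1+t)^{\gamma-p\sigma}$, so the inequality holds precisely when $\frac{\gamma}{p-1}\le\sigma\le1-\gamma$ (the borderline $p\sigma=1$ producing only a logarithm, absorbed whenever $\sigma<1-\gamma$ strictly). Because $p(1-\gamma)>1$ in every case, the interval $[\frac{\gamma}{p-1},1-\gamma]$ is nonempty, and the choice of $\sigma$ is exactly where the hypotheses enter: in (i) and (ii) I take $\sigma=1-\gamma$, which satisfies $\sigma\le\alpha-1$ by $\alpha+\gamma\ge2$ in (i) and requires no upper constraint in (ii) since $u_1\equiv0$ and $1-\gamma<\alpha$; in (iii) the presence of $u_1$ forces $\sigma\le\alpha-1<1-\gamma$, and the lower requirement $\frac{\gamma}{p-1}\le\alpha-1$ is exactly $p\ge1+\frac{\gamma}{\alpha-1}$, so $\sigma=\alpha-1$ is admissible.

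Finally, setting $\epsilon=\|u_0\|_{L^\infty(\Omega)}+\|u_1\|_{L^\infty(\Omega)}$ and $R=2C\epsilon$, the three estimates show that for $\epsilon$ small $G$ maps the ball $B_R\subset X_\sigma$ into itself and is a contraction, since the self-mapping reduces to $C_1R^{p-1}\le\tfrac12$ and the contraction to $2C_1R^{p-1}<1$, both attainable because $p>1$. The Banach fixed point theorem then produces a unique $u\in B_R$ solving the integral equation on all of $(0,\infty)$. As $\|u\|_{X_\sigma}\le R$ forces $u\in L^\infty((0,T),L^\infty(\Omega))$ for every $T$, the blow-up alternative of Theorem \ref{theorem1} gives $T^*=+\infty$; and since both ${}_0I_t^1P_\alpha(t)u_1$ and $N(u)(t)$ vanish in $L^\infty(\Omega)$ as $t\to0^+$, the fixed point satisfies $\lim_{t\to0^+}\|u(t)-P_\alpha(t)u_0\|_{L^\infty(\Omega)}=0$ and hence coincides with the maximal mild solution of \eqref{20.1}.
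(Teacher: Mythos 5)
Your proof is correct, and it rests on the same overall strategy as the paper's---a contraction argument in a time-weighted $L^\infty((0,\infty),L^\infty(\Omega))$ space built on the decay estimates of Lemma \ref{lemma100}---but the technical core is genuinely different. The paper fixes the single weight $\sigma=\frac{\gamma}{p-1}$ in all three cases, keeps the nonlinear term as the double integral in \eqref{9.13.2}, evaluates the inner Beta integral to produce $s^{-\gamma/(p-1)}$, and then bounds $\int_0^t\frac{(t-s)^{\alpha-1}}{1+(t-s)^{2\alpha}}\,s^{-\gamma/(p-1)}ds$ by $Ct^{-\gamma/(p-1)}$ through a Mittag-Leffler identity combined with the sign and asymptotics of $E_{\alpha,\alpha}(-x)$ (the chain of estimates beginning at \eqref{7.29.1} and ending at \eqref{7.4.1}); the case hypotheses enter there only through the inequalities $\frac{\gamma}{p-1}+1\le\alpha$ and $\frac{p\gamma}{p-1}<1$. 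You instead absorb ${}_0I_s^\gamma$ into the solution operator by the operator-level analogue of \eqref{6.16.8}, invoke the bound $\|{}_0I_t^\gamma[t^{\alpha-1}S_\alpha(t)v]\|_{L^\infty(\Omega)}\le C(1+t^{1-\gamma})^{-1}\|v\|_{L^\infty(\Omega)}$ already available in Lemma \ref{lemma100}(ii) (your preliminary observation that $p(1-\gamma)>1$, hence $\gamma<1$, holds in all three cases is exactly what legitimizes this), and finish with an elementary split-at-$t/2$ convolution estimate, choosing the weight case by case: $\sigma=1-\gamma$ in (i) and (ii), $\sigma=\alpha-1$ in (iii). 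Your route avoids the paper's Mittag-Leffler computation entirely and makes transparent where each hypothesis is used, namely in the nonemptiness of the admissible window $\frac{\gamma}{p-1}\le\sigma\le 1-\gamma$ intersected with $\sigma\le\alpha-1$ when $u_1\not\equiv0$; it even yields a stronger decay conclusion in cases (i) and (ii), since $1-\gamma>\frac{\gamma}{p-1}$ whenever $p(1-\gamma)>1$. The paper's route, in exchange, runs all three cases through one uniform weight and one set of estimates, with the rate $t^{-\gamma/(p-1)}$ naturally matched to the forcing. Your treatment of the borderline $p\sigma=1$ (a logarithm, absorbed when $\sigma<1-\gamma$ strictly, and impossible when $\sigma=1-\gamma$ because $p(1-\gamma)>1$) and the final identification of the fixed point with the maximal mild solution of Theorem \ref{theorem1} via the blow-up alternative are both sound.
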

\begin{proof}
(i) We prove the global existence of solutions for problem
\eqref{20.1} by the contraction mapping principle.

Let
$
X=\{u\in L^{\infty}((0,\infty),L^\infty(\Omega))\ |\
\|u\|<\infty\},
$
where
$
\|u\|=\sup_{t>0}(1+t)^\frac{\gamma}{p-1}\|u(t)\|_{L^\infty(\Omega)}.
$
Then $X$ is a Banach space.
For given $u\in X,$ we define
\[
\Psi(u)(t)=P_\alpha(t)u_0+{}_0I_t^1P_\alpha(t)u_1+
\int_0^t(t-s)^{\alpha-1}S_\alpha(t-s){_0I_s^{\gamma}}(|u|^{p})ds,\ t\geq0.
\]
Let $B_M$
denote the closed ball in $X$ with center $0$ and radius $M$,
where $M>0$ is to be chosen sufficiently small.

To prove our result, it suffices to show that $\Psi$ is a contractive mapping on $\mathcal {E}$ when $\|u_0\|_{L^\infty(\Omega)}$, $\|u_1\|_{L^\infty(\Omega)}$ and $M$ are chosen sufficiently small. The assumptions that $\alpha+\gamma\geq 2$ and $p>\frac{1}{1-\gamma}$ imply $p>\frac{1}{1-\gamma}\geq1+\frac{\gamma}{\alpha-1}$, $\frac{\gamma}{p-1}<\frac{\gamma}{p-1}+1\leq\alpha$ and $\frac{p\gamma}{p-1}<1$.
Hence, by Lemma \ref{lemma100},  there
exists a constant $C>0$ such that
for any $u\in B_M$  and $t\geq0,$
\begin{align}\label{9.13.1}
(1+t)^\frac{\gamma}{p-1} \|P_\alpha(t)u_0\|_{L^\infty(\Omega)}
\leq C(1+t)^{\frac{\gamma}{p-1}-\alpha}\|u_0\|_{L^\infty(\Omega)}\leq C\|u_0\|_{L^\infty(\Omega)},
\end{align}
\begin{align}\label{7.1.1}
(1+t)^\frac{\gamma}{p-1} \|{}_0I_t^1P_\alpha(t)u_1\|_{L^\infty(\Omega)}
\leq C(1+t)^{\frac{\gamma}{p-1}+1-\alpha}\|u_1\|_{L^\infty(\Omega)}\leq C\|u_1\|_{L^\infty(\Omega)},
\end{align}
and
\begin{align}\label{9.13.2}
&(1+t)^\frac{\gamma}{p-1}\|\Psi(u)-P_\alpha(t)u_0-{}_0I_t^1P_\alpha(t)u_1\|_{L^\infty(\Omega)}\nonumber\\
\leq&C(1+t)^\frac{\gamma}{p-1}\int_0^t\int_0^s
\frac{(t-s)^{\alpha-1}(s-\tau)^{\gamma-1}}{1+(t-s)^{2\alpha}}
\|u(\tau)\|_{L^\infty(\Omega)}^pd\tau ds\nonumber\\
\leq&CM^p(1+t)^\frac{\gamma}{p-1}
\int_0^t\int_0^s\frac{(t-s)^{\alpha-1}(s-\tau)^{\gamma-1}}{1+(t-s)^{2\alpha}}
(1+\tau)^{-\frac{p\gamma}{p-1}}d\tau ds\nonumber\\
\leq& CM^p(1+t)^\frac{\gamma}{p-1}
\int_0^t\frac{(t-s)^{\alpha-1}}{1+(t-s)^{2\alpha}}
\int_0^s(s-\tau)^{\gamma-1}\tau^{-\frac{p\gamma}{p-1}}d\tau ds\nonumber\\
=&CM^p(1+t)^\frac{\gamma}{p-1}
\int_0^t\frac{(t-s)^{\alpha-1}}{1+(t-s)^{2\alpha}}
s^{-\frac{\gamma}{p-1}}ds.
\end{align}

Next, we estimate \eqref{9.13.2}.
In terms of \eqref{100.1}, we know that there exist positive constants $C_1,C_2$ and $L$ such that for $t>L$
\[
\frac{C_1}{1+t^{2\alpha}}\leq -E_{\alpha,\alpha}(-t^\alpha)\leq \frac{C_2}{1+t^{2\alpha}}.
\]
This implies that for $t>2L$
\begin{align}\label{7.29.1}
&\int_0^t\frac{(t-s)^{\alpha-1}}{1+(t-s)^{2\alpha}}
s^{-\frac{\gamma}{p-1}}ds\nonumber\\
\leq& C\int_0^{t-L}-(t-s)^{\alpha-1}E_{\alpha,\alpha}(-(t-s)^\alpha)s^{-\frac{\gamma}{p-1}}ds
+\int_{t-L}^t\frac{(t-s)^{\alpha-1}}{1+(t-s)^{2\alpha}}
s^{-\frac{\gamma}{p-1}}ds\nonumber\\
=&C\int_0^{t}-(t-s)^{\alpha-1}E_{\alpha,\alpha}(-(t-s)^\alpha)s^{-\frac{\gamma}{p-1}}ds
+C\int_{t-L}^{t}(t-s)^{\alpha-1}
E_{\alpha,\alpha}(-(t-s)^\alpha)s^{-\frac{\gamma}{p-1}}ds\nonumber\\
&+\int_{t-L}^t\frac{(t-s)^{\alpha-1}}{1+(t-s)^{2\alpha}}
s^{-\frac{\gamma}{p-1}}ds.
\end{align}
Furthermore, since $\frac{\gamma}{p-1}<1$ and $\alpha>1$, we know  that $(t-s)^{\alpha-1}E_{\alpha,\alpha}((t-s)^\alpha)
s^{-\frac{\gamma}{p-1}}\in L^1((0,t))$ for given $t>0$. Thus it follows from the dominated convergence theorem that
\begin{align*}
\int_0^t(t-s)^{\alpha-1}E_{\alpha,\alpha}(-(t-s)^\alpha)
s^{-\frac{\gamma}{p-1}}ds
&=\sum_{k=0}^\infty\int_0^t\frac{(-1)^k(t-s)^{\alpha k+\alpha-1}s^{-\frac{\gamma}{p-1}}}{\Gamma(\alpha k+\alpha)}ds\\
&=\Gamma(1-\frac{\gamma}{p-1})t^{\alpha-\frac{\gamma}{p-1}}
E_{\alpha,\alpha+1-\frac{\gamma}{p-1}}(-t^\alpha).
\end{align*}
 Consequently, we conclude from \eqref{7.29.1} that for $t>2L$
\[
\int_0^t\frac{(t-s)^{\alpha-1}}{1+(t-s)^{2\alpha}}
s^{-\frac{\gamma}{p-1}}ds\leq -Ct^{\alpha-\frac{\gamma}{p-1}}
E_{\alpha,\alpha+1-\frac{\gamma}{p-1}}(-t^\alpha)+Ct^{-\frac{\gamma}{p-1}}\\
\leq Ct^{-\frac{\gamma}{p-1}}.
\]
Hence
\begin{equation}\label{7.4.1}
(1+t)^\frac{\gamma}{p-1}\|\Psi(u)-P_\alpha(t)u_0-{}_0I_t^1P_\alpha(t)u_1\|_{L^\infty(\Omega)}
\leq CM^p.
\end{equation}

On the other hand, an argument similar to the above proof shows that there exists a constant $C>0$ such that for any $u,v\in B_M$  and $t\geq0,$
\begin{align}\label{9.13.3}
&(1+t)^\frac{\gamma}{p-1}\|\Psi(u)-\Psi(v)\|_{L^\infty(\Omega)}\nonumber\\
\leq& CM^{p-1}(1+t)^\frac{\gamma}{p-1}
\int_0^t\frac{(t-s)^{\alpha-1}}{1+(t-s)^{2\alpha}}
\int_0^s(s-\tau)^{\gamma-1}\tau^{-\frac{p\gamma}{p-1}}d\tau ds\|u-v\|\nonumber\\
\leq&CM^{p-1}(1+t)^\frac{\gamma}{p-1}
\int_0^t\frac{(t-s)^{\alpha-1}}{1+(t-s)^{2\alpha}}
s^{-\frac{\gamma}{p-1}}ds\|u-v\|\nonumber\\
\leq &CM^{p-1}\|u-v\|.
\end{align}

Combining \eqref{9.13.1},\eqref{7.1.1}, \eqref{7.4.1} and \eqref{9.13.3}, we know that $\Psi$ is a strict contractive map on $B_M$ if $\|u_0\|_{L^\infty(\Omega)}$, $\|u_1\|_{L^\infty(\Omega)}$ and $M$ are chosen small enough. Then the contraction mapping principle implies that $\Psi$ has a unique
fixed point $u\in B_M$. In addition, from Lemma \ref{lemma100},  we know that $u\in C((0,+\infty),L^{\infty}(\Omega))$ and $\lim_{t\rightarrow 0^+}\|u(t)-P_\alpha(t)u_0\|_{L^\infty(\Omega)}=0$. Thus problem \eqref{20.1} admits a global mild solution.

(ii) Since $p>\frac{1}{1-\gamma}$ implies that  $\frac{\gamma}{p-1}<\frac{p\gamma}{p-1}<1<\alpha$, we know that the estimates \eqref{9.13.1},\eqref{9.13.2}, \eqref{7.4.1} and \eqref{9.13.3} also hold in this case.  Thus we can obtain the desired conclusion.

(iii) The assumption that $\alpha+\gamma<2$ and $p\geq 1+\frac{\gamma}{\alpha-1}$ imply that  $p\geq1+\frac{\gamma}{\alpha-1}>\frac{1}{1-\gamma}$, $\frac{\gamma}{p-1}<\frac{\gamma}{p-1}+1\leq\alpha$ and $\frac{p\gamma}{p-1}<1$.
Then, repeating the arguments in the proof of case (i), we can complete the proof.
\end{proof}

\section*{Acknowledgment}
\noindent

This work was supported in part by Young Backbone Teachers of Henan Province(No.2021GGJS130).

\end{document}